\documentclass[a4paper,12pt]{article}
\setlength{\textwidth}{16cm}
\setlength{\textheight}{23cm}
\setlength{\oddsidemargin}{0mm}
\setlength{\topmargin}{-1cm}

\usepackage{latexsym}
\usepackage{amsmath}
\usepackage{amssymb}
\usepackage{enumerate}
\usepackage{bm}
\usepackage{mathrsfs}
\usepackage{color}

\usepackage[anchorcolor=blue,%
 bookmarks=true,%
 bookmarksnumbered=true,%
 colorlinks=true,%
 citecolor=cyan,%
 linkcolor=blue,%
 dvipdfmx%
]{hyperref}

\definecolor{refkey}{gray}{0.5}
\definecolor{labelkey}{gray}{0.2}

\usepackage{theorem}
\newtheorem{theorem}{Theorem}[section]
\newtheorem{proposition}[theorem]{Proposition}
\newtheorem{lemma}[theorem]{Lemma}
\newtheorem{corollary}[theorem]{Corollary}

\theorembodyfont{\rmfamily}
\newtheorem{proof}{\textmd{\textit{Proof.}}}

\newtheorem{remark}[theorem]{Remark}

\newtheorem{definition}[theorem]{Definition}

\makeatletter

\@addtoreset{equation}{section}
\makeatother

\newcommand{\qedd}{\hfill \Box}
\newcommand{\ve}{\varepsilon}

\newcommand{\e}{\mathrm{e}}

\newcommand{\R}{\ensuremath{\mathbb{R}}}

\newcommand{\cC}{\ensuremath{\mathcal{C}}}

\newcommand{\cI}{\ensuremath{\mathcal{I}}}

\newcommand{\fm}{\ensuremath{\mathfrak{m}}}

\def\Ric{\mathop{\mathrm{Ric}}\nolimits}

\def\RCD{\mathop{\mathrm{RCD}}\nolimits}

\title{A rigidity result of spectral gap on Finsler manifolds and its application}
\author{Cong Hung MAI\thanks{
Department of Mathematics, Osaka University, Osaka 565-0871, Japan
({\sf hungmcuet@gmail.com})}}

\date{\today}
\pagestyle{plain}

\begin{document}

\maketitle

\begin{abstract}
We investigate the rigidity problem for the sharp spectral gap on Finsler manifolds of weighted Ricci curvature bound  $\textup{Ric}_{\infty} \geq K > 0$. Our main results show that if the equality holds, the manifold necessarily admits a diffeomorphic splitting (or isometric splitting in the particular class of Berwald spaces). This splitting phenomenon is comparable to the Cheeger-Gromoll type splitting theorem by Ohta. We also obtain the rigidity results of logarithmic Sobolev and Bakry--Ledoux isoperimetric inequalities via needle decomposition as corollaries.
\end{abstract}


\section{Introduction}
In geometric analysis, the rigidity problem of a geometric inequality is an essential subject as it would characterize the model spaces attaining the equality case. Obata rigidity theorem \cite{Ob} - which studies the rigidity of the spectral gap under nonnegative Ricci curvature bound, is one of the most popular rigidity results with the constraint of Ricci curvature bound. It stated that if the first nonzero eigenvalue of the Laplacian for a Riemannian manifold attains the global lower bound (concerning the lower bound of Ricci curvature), the manifold must be isometric to a sphere.

In the last decade, this rigidity problem has been generalized to the setting of weighted Riemannian manifolds. In this setting, the weighted Ricci curvature bound $\textup{Ric}_{N} \geq K$ is equivalent to the \emph{curvature-dimension} condition $\textup{CD}(K,N)$ by Lott, Sturm, and Villani. Here the parameters $K$ and $N$ are usually regarded as ``a lower bound of the Ricci curvature'' and ``an upper bound of the dimension.'' The counterparts of Obata's rigidity theorem on weighted manifolds of Ricci curvature bound $\textup{Ric}_{N} \geq K > 0$ were studied in \cite{Ke,Ku} ($N > 0$), \cite{CZ}($N = \infty$) and \cite{Ma1} ($N < -1$). Interestingly, the model spaces in the last two cases are non-compact (product spaces including Gaussian and hyperbolic spaces) even when the curvature is positive. It is natural to consider the generalization of these studies into more general classes of nonsmooth spaces satisfying the curvature-dimension condition. On $\textup{RCD}(K,\infty)$-spaces, the rigidity of the sharp spectral gap is studied on \cite{GKKO}. The result implies a similar isometric splitting to the case of weighted Riemannian manifolds. 

The article investigates the rigidity problem of the spectral gap on $\textup{CD}(K,\infty)$-Finsler manifolds. Our main theorem implies a diffeomorphic splitting phenomenon of Gaussian spaces. This result can be compared to the Cheeger--Gromoll splitting in \cite{Osplit} as the eigenfunction associated with the optimal first nonzero eigenvalue behaves similarly to the Busemann function. 

The article is organized as follows: In Section 2, we briefly introduce Finsler manifolds, including the comparison geometry of weighted Ricci curvature. Section 3 considers the rigidity problem on $\textup{CD}(K,\infty)$-Finsler manifolds and Berwald spaces. We study the rigidity of logarithmic Sobolev and Bakry--Ledoux isoperimetric inequalities as corollaries in Section 4.

\textit{Acknowledgements}.
The author wants to thank Professor Shin-ichi Ohta for valuable advice. The author was supported by JSPS Grant-in-Aid for Scientific Research (KAKENHI) 20J11328 and 19H01786.

\section{Preliminaries}
This section provides a brief introduction to Finsler geometry. The author recommends the readers to read these books \cite{BCS, Obook} for fundamental concepts of Finsler manifolds.
\subsection{Finsler geometry}

A Finsler manifold $(M,F)$ is a pair of a $C^{\infty}$-manifold $M$ and a Finsler structure $F$. On a local coordinate $(x^{i})_{i=1}^{n}$ of an open set $U \subset M$, we use a fiber-wise linear coordinate $(x^{i},v^{j})_{i,j=1}^{n}$ of the tangent bundle $TU$ such that 

\[ v = \sum_{j=1}^{n} v^{j}\frac{\partial}{\partial x^{j}}\bigg|_{x} \in T_{x}M\]
for $x \in U$.

\begin{definition}
A nonnegative function $F: TM \rightarrow [0,\infty)$ is a $\mathcal{C}^{\infty}$-$\emph{Finsler structure}$ if it satisfies the following conditions:

\begin{enumerate}[(a)]
    \item $F$ is  $\mathcal{C}^{\infty}$ on $TM\setminus 0$ (Regularity).
    \item $F(cv) = cF(v)$ for all $v\in TM$ and $c>0$ (Positive 1-homogeneity).
    \item The $n$-square matrix 
    \begin{equation}
        (g_{ij}(v))_{i,j=1}^{n} := \frac{1}{2}\Bigg(  \frac{\partial^{2}[F^2]}{\partial v^i \partial v^j} \Bigg)_{i,j=1}^{n}
    \end{equation}
    is positive-definite for any tangent vector $v\in TM\setminus 0$ (Strong convexity).
\end{enumerate}
\end{definition}
We note that in general $F(v) \neq F(-v)$. If $F(v) = F(-v)$ for all $v\in TM\setminus 0$, we say that $(M,F)$ is \emph{reversible}. On a non-reversible Finsler manifold, we define the \emph{reverse Finsler structure} $\overleftarrow{F}$ of $F$ by putting $\overleftarrow{F} (v) := F(-v)$ for $v\in TM$. 

On a Finsler manifold, we define the asymmetric distance from $x$ to $y$ as follows:
\[ d(x,y) := \inf_{\eta} \int_{0}^{1} F(\dot{\eta}(t))dt,\]
where $\eta: [0,1] \rightarrow M$ runs over all $\mathcal{C}^{1}$-curves with $\eta(0)=x$ and $\eta(1)=y$. We note that $d(x,y) \ne d(y,x)$ in general. We say that a $\mathcal{C}^{\infty}$-curve $\eta$ is a geodesic if it is locally minimizing and has a constant speed with respect to distance $d$. If there is a geodesic $\eta:[0,1] \rightarrow M$ with $\dot{\eta}(0) = v$ for $v\in T_{x}M$, the \emph{exponential map} is defined as $\text{exp}_{x}(v):=\eta(1)$. $(M,F)$ is called \emph{forward complete} if the exponential map is defined everywhere on $TM$. If the reverse structure $(M,\overleftarrow{F})$ is forward complete then $(M,F)$ is \emph{backward complete}. We note that the forward completeness is not necessarily equivalent to the backward completeness when the \emph{reversibility constant} 
\[ \Lambda_{F} := \sup_{v\in TM\setminus 0} \frac{F(-v)}{F(v)} \in [1,\infty]\]
is infinite. We call $u\in\mathcal{C}^{1}(M)$ a \emph{L-Lipschitz} function if 
\[ f(y) -f(x) \leq Ld(x,y)\]
holds for all $x$, $y\in M$. We note that $| f(y) -f(x) |\leq Ld(x,y)$ does not hold on non-reversible Finsler manifolds.

For $v\in T_{x}M$, $(g_{ij}(v))_{i,j=1}^{n}$ induces a (coordinate free) Riemannian structure
\[ g_{v} \bigg(  \sum_{i=1}^{n}a_{i}\frac{\partial}{\partial x^{i}} \bigg|_{x}, \sum_{j=1}^{n}b_{j}\frac{\partial}{\partial x^{j}}\bigg|_{x}\bigg) =  \sum_{i,j=1}^{n} g_{ij}(v)a_{i}b_{j}.\]
We have $g_{v}(v,v) = F^{2}(v)$. $(T_{x}M,g_{v})$ is the best Riemannian approximation of $(T_{x}M,F)$ in the direction $v$. 
Next, we define the dual Minkowski norm $F^{*}: T^{*}M \rightarrow [0,\infty)$ acting on $\alpha = \sum_{i=1}^{n} \alpha_{i}dx^{i} \in T^{*}M$:
\[ F^{*}(\alpha) := \sup_{v\in T_{x}M, F(v) \leq 1} \alpha(v) = \sup_{v\in T_{x}M, F(v) = 1} \alpha(v),\]
and the dual structure in the coordinates $(x^{i},\alpha_{j})_{i,j=1}^{n}$ of $T^{*}U$:

\[ g_{ij}^{*}(\alpha) := \frac{1}{2} \frac{\partial^{2}[(F^{*})^{2}]}{\partial \alpha_{i}\partial \alpha_{j}}(\alpha), \qquad i,j = 1,2,\ldots,n,\]
for $\alpha \in T^{*}U\setminus 0$. Let $\mathcal{L}^{*}: T^{*}M \rightarrow TM$ denote the Legendre transform sending $\alpha \in T^{*}M$ to the unique tangent vector $v\in T_{x}M$ such that $F(v)=F^{*}(\alpha)$ and $\alpha(v) = F^{*}(\alpha)^2$. In coordinate,
\[\mathcal{L}^{*}(\alpha) =  \sum_{i,j=1}^{n} g_{ij}^{*}(\alpha)\alpha_{i}\frac{\partial}{\partial x^{j}} \bigg|_{x} =  \sum_{j=1}^{n}\frac{1}{2} \frac{\partial[(F^{*})^{2}]}{\partial \alpha_{j}}(\alpha)\frac{\partial}{\partial x^{j}} \bigg|_{x}\]
for $\alpha\in T^{*}M\setminus 0$.

The \emph{covariant derivative} of a $\mathcal{C}^{1}$-vector field $X$ by $v\in T_{x}M$ with reference vector $w\in T_{x}M\setminus 0$ is defined as follows:

\begin{equation}
    D_{v}^{w}X(x) := \sum_{i,j=1}^{n}\bigg\{ v^{j}\frac{\partial X^{i}}{\partial x^{j}}(x) + \sum_{k=1}^{n}\Gamma^{i}_{jk}(w)v^{j}X^{k}(x) \bigg\} \frac{\partial}{\partial x^{i}} \bigg|_{x} \in T_{x}M, 
\end{equation}
where $\Gamma_{jk}^{i}$ is the Chern connection coefficient.
For a $\mathcal{C}^{1}$-curve $\eta:[0,l]\rightarrow M$, $\eta$ is geodesic if and only if it satisfies the \emph{geodesic equation}
\[ D_{\dot{\eta}}^{\dot{\eta}}\dot{\eta} = 0.\]

We close this subsection by introducing a particular class of Finsler manifolds with affine property.
\begin{definition}
We call a Finsler manifold $(M,F)$ a \emph{Berwald space} if $\Gamma_{jk}^{i}$ are constant on $T_{x}M\setminus{0}$ for all $x\in M$.
\end{definition}
On Berwald spaces, the covariant derivative does not depend on the reference vector. Moreover, Berwald spaces have finite reversibility and hence the forward and backward completeness are equivalent (see Section 6.3 in \cite{Obook}).

\subsection{Weighted Finsler manifolds}
In this article, we consider the setting of weighted Finsler manifolds. A weighted Finsler manifold is a triple $(M,F,\fm)$ of a boundaryless Finsler manifold $(M,F)$ equipped with a $\mathcal{C}^\infty$-Borel measure $\fm$.

For a differentiable function $u: M\rightarrow \mathbb{R}$, we define the \emph{gradient vector} of $u$ by putting $\nabla u(x):=\mathcal{L}^{*}(du(x))$. The expression of $\nabla u$ in coordinate can be written as

\[ \nabla u = \sum_{i,j=1}^{n} g^{*}_{ij}(du)\frac{\partial u}{\partial x^{j}}\frac{\partial}{\partial x^{i}}\]
on the set $M_{u} := \{x\in M| du(x)\ne 0\}$. $M_u$ is the called the \emph{essential domain} of $u$ as the Legendre transformation is only continuous at the zero section and $g^{*}_{ij}(du)$ is not defined outside $M_u$. On $M\setminus M_u$, we put $\nabla u = 0$. We note that $F(\nabla u) = F^{*}(du)$ by definition.

For a differentiable vector field $V$ on $M$ and $F(V)\in L^{1}_{\text{loc}}(M)$, we define the \emph{divergence} $\textup{div}_{\fm}$ with respect to $\fm$ by

\[ \int_{M}\phi\mathrm{div}_{\fm}(V)d\fm := -\int_{M}d\phi(V)d\fm \qquad \forall \phi \in \mathcal{C}_{c}^{\infty}(M).\]

We denote by $H_{\textup{loc}}^{1}(M)$ the space of weakly differentiable functions $u$ on $M$ such that both $u$ and $F^{*}(du)$ belong to $L_{\textup{loc}}^{2}(M)$. 

\begin{definition}[Nonlinear Laplacian]
The \emph{nonlinear Laplacian} of a function $u \in H_{\textup{loc}}^{1}(M)$ is defined by $\Delta u := \textup{div}_{\fm}(\nabla u)$ in the weak sense that for all $\phi\in\cC_{c}^{\infty}(M)$, we have
\begin{equation}
\int_{M} \phi\Delta u d\fm := - \int_{M} d\phi (\nabla u)d\fm.
\end{equation}
\end{definition}

Nonlinear heat equation $\partial_{t}u = \Delta u$ on Finsler manifolds was intensively studied in (\cite{GS, OS1,OS2,OS3}). We define the \emph{energy functional} $\mathcal{E}: H_{\textup{loc}}^{1}(M) \rightarrow [0,\infty]$ by
\[ \mathcal{E}(u) := \frac{1}{2} \int_{M} F^{*}(du)^2 d\fm.\]
The \emph{Sobolev space} associated with $\mathcal{E}$ is defined by
\[ H^{1}(M) := \{u\in L^{2}(M) \cap H_{\textup{loc}}^{1}(M)| \mathcal{E}(u) + \mathcal{E}(-u) < \infty \}.\]
We denote by $H^{1}_{c}(M)$ the spaces of functions in $H^{1}(M)$ with compact support and by $H^{1}_{0}(M)$ the closure of $\mathcal{C}_{c}^{\infty}(M)$ with respect to the \emph{Sobolev norm}
\[ ||u||_{H^{1}(M)} := \sqrt{||u||_{L^2(M)}+ \mathcal{E}(u) + \mathcal{E}(-u)},\]
where $\mathcal{C}_{c}^{\infty}(M)$ is the set of smooth function with compact support. Lemma 11.4 in \cite{Obook} implies that if $(M,F)$ has finite reversibility $\Lambda_{F} < \infty$ and is complete, then $H_{0}^{1}(M) = H^{1}(M)$ and constant functions on $M$ belong to $H_{0}^{1}(M)$ provided $\fm(M)<\infty$. 

We will introduce preliminary results of comparison geometry on weighted Finsler manifolds of positive weighted Ricci curvature in the following parts. We begin with the definition of the weighted Ricci curvature.
\begin{definition}\label{def:Ric}
Given a unit vector $v\in T_{x}M$, let $V$ be a $C^{\infty}$-vector field on a neighborhood $U$ of $x$ such that all integral curves of $V$ are geodesic. Let $\Psi\in \mathcal{C}^{\infty}(U)$ be the density of $\fm$ when we decompose it as $\fm = \textup{e}^{-\Psi}\text{vol}_{g_{V}}$ on $U$, and $\text{vol}_{g_{V}}$ is the standard volume form of $g_{V}$. Let $\eta:(-\epsilon,\epsilon)\rightarrow M$ be the geodesic satisfying $\dot{\eta}(0) =v$. Then, for $N\in(-\infty,0)\cap(n,\infty)$, we define the \emph{weighted Ricci curvature} as follows:
\[ \textup{Ric}_{N}(v) := \textup{Ric}(v) + (\Psi\circ\eta)''(0) - \frac{ (\Psi\circ\eta)'(0)^{2}}{N-n}.\]
We also define as the limit:
\[ \textup{Ric}_{\infty}(v) := \textup{Ric}(v) + (\Psi\circ\eta)''(0). \]
For $c\geq 0$, we set $\text{Ric}_{N}(cv) := c^{2}\textup{Ric}_{N}(v)$ by convention.
\end{definition}
If $\textup{Ric}_{N}(v) \geq KF^{2}(v)$ for all $v\in TM$, we write $\textup{Ric}_{N} \geq K$. This curvature bound is equivalent to the \emph{curvature-dimension condition} CD($K,N$) (\cite{Oneg}). The condition $\text{Ric}_{\infty} \geq K > 0$ implies that $(M,F,\fm)$ enjoys the Gaussian decay and hence its total mass is finite by Theorem 9.19 in \cite{Obook}. 

In the following paragraphs, we will introduce a vital estimate related to weighted Ricci curvature - the \emph{Bochner formula}. We give some basic notation first. For a twice differentiable function $u$ and $x\in M_u$, we define the \emph{Hessian} $\nabla^{2}(u)(x)\in T^{*}_{x}M \otimes T^{*}_{x}M$ via the covariant derivative:

\[ \nabla^{2}(u)(v) := D_{v}^{\nabla u}(\nabla u)(x) \in T_{x}M, \qquad v\in T_{x}M.\]
Note that $g_{\nabla u}(\nabla^{2}(u)(v),w) = g_{\nabla u}(v,\nabla^{2}(u)(w))$ for all $v,w\in T_{x}M$ and $x\in M_{u}$ (see \cite{OS3}).

Let $f\in H^{1}_{\text{loc}}(M)$. For a measurable vector field $V$ which is nonzero almost everywhere on $M_{f}$, we define the gradient vector field and linearized Laplacian on the approximated weighted Riemannian structure $(M,g_{V},\fm)$ as follows:

\[\nabla^{V}f :=\begin{cases} \sum_{i,j=1}^{n}g^{ij}(V)\frac{\partial f}{\partial x^{j}}\frac{\partial}{\partial x^{i}}
 & \text{on}\ M_{f}, \\
 0 & \text{on}\ M\setminus M_{f}, \end{cases} \qquad \Delta^{V}f:= \text{div}_{\fm}(\nabla^{V}f),\]
where the Laplacian is defined in terms of distribution and $(g^{ij}(v))_{i,j=1}^{n}$ denotes the inverse matrix of$(g_{ij}(v))_{i,j=1}^{n}$. We have:
\begin{equation}
    df_{2}(\nabla^{V}f_{1}) = g_{V}(\nabla^{V}f_{1},\nabla^{V}f_{2}) = df_{1}(\nabla^{V}f_{2}) 
\end{equation}
for $f_{1}, f_{2}\in  H^{1}_{\text{loc}}(M)$ and $V$ such that $V\neq 0$ almost everywhere on $M$. It is known in \cite{OS1} that $\nabla^{\nabla u}u = \nabla u$ and $\Delta^{\nabla u}u = \Delta u$ for $u\in H_{\textup{loc}}^{1}(M)$.

On a weighted Finsler manifold $(M, F, \fm)$, we have the Bochner formula  and its integrated version holds (see \cite{OS3}).
\begin{proposition}[Bochner formula]
For $u \in C^{\infty}(M)$, we have
\begin{equation}
\Delta^{\nabla u} \Big[ \frac{F^{2}(\nabla u)}{2} \Big]-d(\Delta u)(\nabla u) = \textup{Ric}_{\infty}(\nabla u) + ||\nabla^{2} u||^{2}_{\textup{HS}(\nabla u)}
\end{equation}
on $M_u$.
\end{proposition}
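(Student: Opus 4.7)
The plan is to reduce the identity to the standard weighted Bochner--Weitzenb\"ock formula for the \emph{osculating} Riemannian structure $(M, g_{\nabla u}, \fm)$ defined near any point of $M_u$. Fix $x \in M_u$. Since $u$ is smooth and $du \ne 0$ at $x$, the Legendre transform yields a smooth nonvanishing vector field $V := \nabla u$ on a neighborhood of $x$. I would then consider the Riemannian metric $g_V$ together with the local representation $\fm = \e^{-\Psi}\vol_{g_V}$ used in Definition~\ref{def:Ric}. Combining (2.4) with the identity $\nabla^{\nabla u}u = \nabla u$ from \cite{OS1}, one has $g_V(\nabla u, W) = du(W)$ for every vector $W$, so $\nabla^V u = \nabla u$, $g_V(\nabla u, \nabla u) = F^{2}(\nabla u)$, and $\Delta^V u = \Delta u$.

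Next I would apply the weighted Riemannian Bochner formula on the smooth Riemannian manifold $(M, g_V, \fm)$ to the function $u$:
\[
\Delta^V \!\left[\tfrac{1}{2}\, g_V(\nabla^V u, \nabla^V u)\right] - g_V\!\left(\nabla^V u, \nabla^V(\Delta^V u)\right) = \Ric^{g_V}(V) + \Hess_{g_V}\Psi(V,V) + \|\Hess_{g_V} u\|_{g_V}^{2}.
\]
Under the identifications of the previous paragraph, and by a second application of (2.4) with $f_{1}=u$, $f_{2}=\Delta u$, the left-hand side is exactly $\Delta^{\nabla u}\!\left[F^{2}(\nabla u)/2\right] - d(\Delta u)(\nabla u)$. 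It then remains to match the right-hand side with $\Ric_{\infty}(\nabla u) + \|\nabla^{2}u\|^{2}_{\textup{HS}(\nabla u)}$.

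The Hessian identification is straightforward: since $\nabla^{2}u(v) = D^{\nabla u}_{v}(\nabla u)$ and the Chern-connection coefficients with reference vector $\nabla u$, when acting on $\nabla u$ itself, coincide with the Christoffel symbols of the Levi-Civita connection of $g_V$, one has $\|\nabla^{2}u\|^{2}_{\textup{HS}(\nabla u)} = \|\Hess_{g_V} u\|_{g_V}^{2}$. The curvature identification is the heart of the matter and the main technical obstacle: I need $\Ric^{F}(\nabla u) + (\Psi\circ\eta)''(0) = \Ric^{g_V}(V) + \Hess_{g_V}\Psi(V,V)$ at $x$. For the weight part, choosing $\tilde V$ as in Definition~\ref{def:Ric} --- a local extension of $V(x)/F(V(x))$ whose integral curves are $F$-geodesics --- and letting $\eta$ be the integral curve through $x$, one gets $(\Psi\circ\eta)''(0) = \Hess_{g_{\tilde V}}\Psi(\tilde V,\tilde V)|_{x}$ since $\eta$ is also a $g_{\tilde V}$-geodesic by the osculation property. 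For the unweighted Ricci term, the same osculation argument --- showing that the Chern-connection Ricci curvature of $F$ in direction $\tilde V$ agrees with the Riemannian Ricci curvature of $g_{\tilde V}$ in the same direction at $x$, a pointwise identity which fails in other directions --- is the key geometric input; once it is in place, the $2$-homogeneity convention extends the identity from $\tilde V$ to $V$, and the Finsler Bochner formula follows from its weighted Riemannian counterpart.
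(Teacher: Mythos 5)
First, note that the paper itself gives no proof of this proposition: it is quoted from Ohta--Sturm \cite{OS3}. Your framework --- reducing to the weighted Riemannian Bochner formula for the osculating structure $(M,g_{\nabla u},\fm)$ --- is indeed the right starting point, and your treatment of the left-hand side is correct: since $\nabla^{\nabla u}u=\nabla u$, $\Delta^{\nabla u}u=\Delta u$, and $\Delta^{\nabla u}$ is by definition the $\fm$-weighted Laplacian of $g_{\nabla u}$, the Riemannian left-hand side coincides with the Finsler one on $M_u$.

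The genuine gap is on the right-hand side, where you match the Hessian term and the curvature term \emph{separately}; neither identification is true in general, and the entire content of the theorem is the cancellation between the two discrepancies. The Chern connection with reference vector $\nabla u$ is not the Levi--Civita connection of $g_{\nabla u}$: their difference involves the Cartan tensor contracted with $D^{\nabla u}_{\cdot}\nabla u$, and one finds $\Hess_{g_{\nabla u}}u(X,Y)=g_{\nabla u}(\nabla^{2}u(X),Y)+C_{\nabla u}(\nabla^{2}u(\nabla u),X,Y)$, so $\|\nabla^{2}u\|_{\textup{HS}(\nabla u)}=\|\Hess_{g_{\nabla u}}u\|_{g_{\nabla u}}$ fails unless $\nabla^{2}u(\nabla u)=0$, i.e.\ unless the integral curves of $\nabla u$ are geodesics --- true for the eigenfunction \emph{after} Theorem \ref{th:main} is proved, but not for the arbitrary smooth $u$ of the proposition. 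Likewise, the osculation fact you invoke for the Ricci term holds for $g_{\tilde V}$ with $\tilde V$ a geodesic extension of the single vector $\nabla u(x)$; this is a different metric from $g_{\nabla u}$, agreeing with it only at $x$ (a $0$-jet statement), whereas Ricci curvature and the weight Hessian depend on the $2$-jet, and the weight $\Psi$ of Definition \ref{def:Ric} is taken relative to $\vol_{g_{\tilde V}}$, not $\vol_{g_{\nabla u}}$. Hence the claimed identity $\Ric_{\infty}(\nabla u)=\Ric^{g_{\nabla u}}(\nabla u)+\Hess_{g_{\nabla u}}\Psi(\nabla u,\nabla u)$ is also false in general; your "$2$-homogeneity extends the identity from $\tilde V$ to $V$" does not bridge the change of reference field. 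A sanity check: on a Minkowski space with the Lebesgue measure, $\Ric_{\infty}\equiv 0$ and the Chern connection is flat in linear coordinates, yet $g_{\nabla u}$ is typically a curved Riemannian metric with nonconstant volume density, so both of your pointwise identifications fail even though the Bochner formula holds. What is actually needed, and what \cite{OS3} proves by a computation handling the non-geodesic reference field $\nabla u$ and the Cartan-tensor corrections, is the compensated identity that the \emph{sum} $\Ric_{\infty}^{g_{\nabla u},\fm}(\nabla u)+\|\Hess_{g_{\nabla u}}u\|^{2}_{g_{\nabla u}}$ equals $\Ric_{\infty}(\nabla u)+\|\nabla^{2}u\|^{2}_{\textup{HS}(\nabla u)}$; this cancellation is precisely the step your proposal leaves out.
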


\begin{proposition}[Integrated Bochner formula]
For $u \in H^{2}_{\textup{loc}}(M)\cap \mathcal{C}^{1}(M)$ such that $\Delta u \in H^{1}_{\textup{loc}}(M)$ and all bounded test function $\phi \in H^{1}_{c}(M)\cap L^{\infty}(M)$, we have
\begin{equation}\label{eq:intB}
-\int_{M} d\phi \Big( \nabla^{\nabla u} \Big[ \frac{F^{2}(\nabla u)}{2} \Big]\Big) d\fm= \int_{M} \phi \big( d(\Delta u)(\nabla u)  + \textup{Ric}_{\infty}(\nabla u) + ||\nabla^{2}  u||^{2}_{\textup{HS}(\nabla u)}\big)d\fm.
\end{equation}
\end{proposition}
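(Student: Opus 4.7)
The plan is to derive the integrated identity from the pointwise Bochner formula of the preceding proposition by testing against $\phi$ and integrating by parts on the essential domain $M_u$, where the reference vector field $\nabla u$ is nonzero and the linearized structure $g_{\nabla u}$ is nondegenerate. All Finsler nonlinearity is absorbed into the reference vector, so on $M_u$ the linearized Laplacian $\Delta^{\nabla u}$ behaves as an honest weighted Riemannian Laplacian, and the argument reduces to a Riemannian-style computation modulo a careful treatment of the degeneracy locus $M \setminus M_u$.

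First I would multiply the pointwise Bochner formula by $\phi$ and integrate over $M_u$. By the convention that $\nabla u \equiv 0$ off $M_u$, each of $d(\Delta u)(\nabla u)$, $\textup{Ric}_\infty(\nabla u)$, $||\nabla^2 u||^2_{\textup{HS}(\nabla u)}$, and the vector field $\nabla^{\nabla u}[F^2(\nabla u)/2]$ vanishes outside $M_u$, so integration over $M_u$ can be replaced by integration over $M$ at no cost. It then suffices to transform $\int_{M_u}\phi\,\Delta^{\nabla u}[F^2(\nabla u)/2]\,d\fm$ into $-\int_M d\phi\bigl(\nabla^{\nabla u}[F^2(\nabla u)/2]\bigr)\,d\fm$ via the divergence theorem for the weighted Riemannian triple $(M_u, g_{\nabla u}, \fm)$. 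This a priori produces a boundary contribution on $\partial M_u$, which I would show to vanish by approximating $\phi$ by $\phi \chi_\epsilon$, with $\chi_\epsilon$ a smooth cutoff vanishing where $F^*(du)$ is small and converging to $1$ as $\epsilon \to 0$. Since $F^2(\nabla u) = F^*(du)^2$ is Lipschitz in $du$ and goes continuously to zero at $\partial M_u$, dominated convergence together with local $L^2$-bounds on $\nabla^{\nabla u}[F^2(\nabla u)/2]$ pushes the boundary contribution to zero. Finally, the smoothness assumption on $u$ is removed by mollification: approximating $u$ by smooth $u_k$ with $u_k \to u$ in $H^2_{\loc}$ and $\Delta u_k \to \Delta u$ in $H^1_{\loc}$ (using the hypothesis $\Delta u \in H^1_{\loc}$), one passes to the limit on both sides using compactness of $\supp \phi$ and the continuity of $F^*$ and $g^*_{ij}$ on $T^*M \setminus 0$.

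The main obstacle is precisely the boundary behavior along $\partial M_u$: since $\Delta^{\nabla u}$ and $\nabla^{\nabla u}$ are defined only on $M_u$, whose boundary may be highly irregular, the divergence theorem cannot be applied naively, and the cutoff scheme above is the central technical device. Showing that the boundary flux is genuinely zero — and not merely that the formal manipulation yields the right expression — relies on the continuity of $F^2(\nabla u)$ across $\partial M_u$ and its vanishing there, together with the fact that both the Hessian and Ricci terms vanish on $M \setminus M_u$ by convention.
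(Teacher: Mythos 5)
First, a point of context: the paper does not actually prove this proposition --- it is quoted from Ohta--Sturm \cite{OS3} (see also \cite{Obook}) --- so there is no in-paper argument to compare against, and I assess your plan on its own terms. The first half of your plan is sound in outline and close in spirit to how the degenerate set is handled in that literature: since $\Delta^{\nabla u}$ is by definition the distributional divergence of $\nabla^{\nabla u}$, the whole content of the identity is that no singular contribution concentrates on $M\setminus M_u$, and a cutoff $\chi_\epsilon$ built from $F^*(du)$ is the right device (note, though, that the a.e.\ vanishing of $\|\nabla^2 u\|^2_{\textup{HS}(\nabla u)}$ on $\{du=0\}$ is a measure-theoretic fact, Lemma 12.12 in \cite{Obook}, not a convention). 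However, the error term you must kill is $\int_M \phi\, d\chi_\epsilon\bigl(\nabla^{\nabla u}[F^2(\nabla u)/2]\bigr)\,d\fm$, and since $|d\chi_\epsilon|$ blows up like $\epsilon^{-1}$ on the transition region $\{F^*(du)\sim\epsilon\}$, ``dominated convergence plus local $L^2$-bounds on $\nabla^{\nabla u}[F^2(\nabla u)/2]$'' does not suffice as stated; you need the a.e.\ factorization $\nabla^{\nabla u}[F^2(\nabla u)/2]=F(\nabla u)\,\nabla^{\nabla u}[F(\nabla u)]$, so that the factor $F(\nabla u)\lesssim\epsilon$ cancels the $\epsilon^{-1}$ and the integrand is dominated by an $L^1_{\mathrm{loc}}$ function built from $F^*(du)\in H^1_{\mathrm{loc}}$ (available since $u\in H^2_{\mathrm{loc}}\cap \mathcal{C}^1$). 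That is a repairable imprecision.

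The genuine gap is your smoothing step. The Finsler Laplacian is nonlinear, so mollification (or any standard approximation) does not produce smooth $u_k$ with $\Delta u_k\to\Delta u$ in $H^1_{\mathrm{loc}}$; the hypothesis $\Delta u\in H^1_{\mathrm{loc}}$ concerns $u$ only and gives no control of $d(\Delta u_k)$, which involves third derivatives of $u_k$ not bounded by the $H^2_{\mathrm{loc}}$ data. As written, you cannot pass to the limit in $\int_M\phi\, d(\Delta u_k)(\nabla u_k)\,d\fm$, and since the pointwise Bochner formula is only available for smooth functions, this limit step is where the proposition's stated generality actually lives. A workable repair is to rewrite that term before taking $k\to\infty$, using $\nabla^{\nabla u_k}u_k=\nabla u_k$ and the definition of $\div_{\fm}$, as $-\int_M \Delta u_k\, d\phi(\nabla u_k)\,d\fm-\int_M\phi\,(\Delta u_k)^2 d\fm$, which only needs $\Delta u_k\to\Delta u$ in $L^2_{\mathrm{loc}}$ (attainable, because $D^2u$ and $\Delta u$ vanish a.e.\ on $\{du=0\}$ while $g^{*ij}(du_k)\to g^{*ij}(du)$ locally uniformly on $M_u$), and then to restore the stated form in the limit using $\Delta u\in H^1_{\mathrm{loc}}$. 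You would also need to justify convergence of $\textup{Ric}_\infty(\nabla u_k)$ and $\|\nabla^2 u_k\|^2_{\textup{HS}(\nabla u_k)}$, whose reference vectors vary with $k$ and whose coefficients $g_{ij}$ have no limit at the zero section: a.e.\ convergence on $M_u$, uniform local ellipticity of $g_{ij}$ on compacta, and uniform integrability from $D^2u_k\to D^2u$ in $L^2_{\mathrm{loc}}$ are required, and your appeal to continuity of $g^*_{ij}$ on $T^*M\setminus 0$ alone does not cover the degenerate set.
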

Here $||\cdot||_{\textup{HS}}$ stands for the Hilbert–Schmidt norm. We note that the regularities $u \in H^{2}_{\textup{loc}}(M)\cap \mathcal{C}^{1}(M)$ and $\Delta u \in H^{1}_{\textup{loc}}(M)$ are fulfilled by solutions of the heat equation (see \cite{OS1}).

As a consequence of the curvature-dimension condition, we obtain the following functional inequalities (see \cite{Oint, Obook}).
\begin{theorem}[Logarithmic Sobolev inequality]
Assume that $(M,F,\fm)$ is forward and backward complete with $\fm(M) = 1$ and $\textup{Ric}_{\infty} \geq K > 0$. For any locally Lipschitz, nonnegative function $\rho$ with $\int_{M}\rho d\fm = 1$, we have:
\begin{equation}
\int_{M}\rho \log{\rho} d\fm \leq \frac{1}{2K} \int_{M}\frac{F^{2}(\nabla\rho)}{\rho}d\fm,
\end{equation}
where the integrand on the right-hand side is regarded as $0$ on $\rho^{-1}(0)$.
\end{theorem}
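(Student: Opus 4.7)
The plan is to adapt the Bakry--\'Emery heat-semigroup argument to the Finsler setting, using the integrated Bochner formula \eqref{eq:intB} as the central device. Since $(M,F,\fm)$ is forward and backward complete with $\fm(M)=1$ and $\mathrm{Ric}_\infty\geq K>0$, the nonlinear heat flow $P_t$ constructed in \cite{OS1,OS2} is well defined, and I put $\rho_t:=P_t\rho$. After first approximating $\rho$ by a smooth positive density bounded below so that the manipulations below make sense, the general statement follows by a standard limiting procedure.

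The first ingredient is the entropy dissipation identity. Using the weak definition of $\Delta$ and the identity $\nabla^{\nabla\rho_t}\rho_t=\nabla\rho_t$, integration by parts gives
\begin{equation*}
\frac{d}{dt}\int_M\rho_t\log\rho_t\,d\fm = \int_M(\log\rho_t+1)\Delta\rho_t\,d\fm = -\int_M\frac{F^2(\nabla\rho_t)}{\rho_t}\,d\fm =: -I(\rho_t).
\end{equation*}
The second ingredient is the exponential decay of Fisher information. Writing $I(\rho_t)=\int_M\rho_t F^2(\nabla u_t)\,d\fm$ with $u_t:=\log\rho_t$ (so that $\partial_t u_t=\Delta u_t+F^2(\nabla u_t)$), I would differentiate in $t$ and apply \eqref{eq:intB} to $u_t$ with test function $\rho_t$. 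Invoking the curvature hypothesis $\mathrm{Ric}_\infty(\nabla u_t)\geq K F^2(\nabla u_t)$ and the nonnegativity of $\|\nabla^2 u_t\|^2_{\mathrm{HS}(\nabla u_t)}$ collapses the resulting expression to the Bakry--\'Emery estimate
\begin{equation*}
\frac{d}{dt}I(\rho_t)\leq -2K\,I(\rho_t),
\end{equation*}
so that Gronwall gives $I(\rho_t)\leq e^{-2Kt}I(\rho)$.

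To close the argument, I need $\mathrm{Ent}(\rho_t)\to 0$ as $t\to\infty$. This follows from the $L^2$-Poincar\'e inequality implied by $\mathrm{Ric}_\infty\geq K>0$, which forces $\rho_t\to 1$ exponentially fast and hence $\mathrm{Ent}(\rho_t)\to 0$ by uniform integrability together with the Gaussian decay of $\fm$. Integrating the entropy dissipation identity on $[0,\infty)$ then gives
\begin{equation*}
\int_M\rho\log\rho\,d\fm = \int_0^\infty I(\rho_t)\,dt \leq I(\rho)\int_0^\infty e^{-2Kt}\,dt = \frac{I(\rho)}{2K},
\end{equation*}
and a final approximation step removes the initial regularization.

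The main obstacle I expect is the rigorous justification of the Fisher information derivative: $u_t=\log\rho_t$ is typically unbounded near $\{\rho_t\to 0\}$, whereas \eqref{eq:intB} requires $u\in H^2_{\mathrm{loc}}\cap\mathcal{C}^1$ and $\Delta u\in H^1_{\mathrm{loc}}$. The standard remedy is to work with $\log(\rho_t+\varepsilon)$ times a compactly supported cutoff, apply \eqref{eq:intB} there, and pass to the limits $\varepsilon\downarrow 0$ and cutoff $\uparrow M$ using the parabolic smoothing of $P_t$ developed in \cite{OS1,OS3}. A finite-reversibility condition on $F$ (or its equivalent) is also implicitly at play so that $H^1_0(M)=H^1(M)$ and constants are admissible limits of test functions.
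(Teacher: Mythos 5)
The paper does not prove this theorem; it cites \cite{Oint,Obook}. Your proposal is therefore not being measured against a proof in the present paper, but against the standard routes in the literature. The reference \cite{Oint} obtains the logarithmic Sobolev inequality from displacement $K$-convexity of the relative entropy on the $L^2$-Wasserstein space (via the HWI inequality), which is a genuinely different mechanism from yours. Your Bakry--\'Emery heat-semigroup argument is the other standard route and is, in spirit, the one developed in \cite{Obook} and in Ohta's semigroup paper \cite{Oisop}; so you are essentially reproducing one of the two cited proofs rather than inventing a third. The comparison is that the optimal-transport route sidesteps the parabolic regularity theory entirely (no need for the nonlinear heat flow, no $\varepsilon$-regularizations, no cutoffs), at the price of invoking the Wasserstein-geodesic characterization of $\mathrm{Ric}_\infty\geq K$; your route stays inside the Bochner/linearized-Laplacian framework of \cite{OS1,OS3} and dovetails better with the rest of this paper, which is built entirely on \eqref{eq:intB}.

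On correctness: the core computation is sound, and the reason it survives the Finsler nonlinearity is worth making explicit, since it is not automatic. The crucial point is that $\nabla\rho_t=\rho_t\nabla u_t$ for $\rho_t>0$ (by the positive $1$-homogeneity of $\mathcal L^*$), so $g_{\nabla\rho_t}=g_{\nabla u_t}$, $\nabla^{\nabla u_t}\rho_t=\nabla\rho_t$, and $\Delta\rho_t=\rho_t(\Delta u_t+F^2(\nabla u_t))$. Because every gradient appearing in the computation shares the reference vector $\nabla u_t$, the symmetry $df_2(\nabla^{V}f_1)=df_1(\nabla^{V}f_2)$ lets you convert $\int F^2(\nabla u_t)\,\Delta\rho_t\,d\fm$ into the left side of \eqref{eq:intB} with $\phi=\rho_t$, and the identity $\frac{d}{dt}I(\rho_t)=-2\int_M\rho_t\big(\mathrm{Ric}_\infty(\nabla u_t)+\|\nabla^2 u_t\|^2_{\mathrm{HS}(\nabla u_t)}\big)\,d\fm$ comes out cleanly. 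If you include the write-up, you should make this reference-vector alignment explicit; without it a reader may worry that the nonlinearity of $\Delta$ breaks the self-adjointness used in the Riemannian Bakry--\'Emery computation. Beyond that, the technical obstacles you flag are real and should not be waved away: (a) $\rho_t$ is not an admissible test function in \eqref{eq:intB} (it is neither compactly supported nor bounded a priori), so you must insert a cutoff $\chi_R$ and use the Gaussian integrability coming from $\mathrm{Ric}_\infty\geq K>0$ to kill the boundary terms, exactly as is done in the proof of Theorem \ref{th:main} of this paper; (b) the regularity $u_t\in H^2_{\mathrm{loc}}\cap\mathcal C^1$, $\Delta u_t\in H^1_{\mathrm{loc}}$ required by \eqref{eq:intB} can fail near $\{\rho_t=0\}$, so the $\log(\rho_t+\varepsilon)$ truncation is necessary and the $\varepsilon\downarrow 0$ limit must be justified via monotone/dominated convergence; and (c) the initial approximation of $\rho$ must be done so that both $\mathrm{Ent}(\rho^{(k)})\to\mathrm{Ent}(\rho)$ and $I(\rho^{(k)})\to I(\rho)$, which is not entirely free since Fisher information is only lower semicontinuous; starting the flow at a positive time $t_0\downarrow 0$ and using the entropy-decrease is the cleaner way to arrange this. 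None of these is a wrong step, but each is a place where the proposal currently asserts rather than proves.
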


\begin{theorem}[Poincar\'e inequality]\label{th:Poin}
Assume that $(M,F,\fm)$ is forward and backward complete with $\fm(M) = 1$ and $\textup{Ric}_{\infty} \geq K > 0$. For any locally Lipschitz function $u\in H_{0}^{1}(M)$, we have
\begin{equation}\label{eq:Poin}
\textup{Var}_{\fm}(u) := \int_{M} u^2 d\fm - \Big( \int_{M}u d\fm \Big)^2 \leq \frac{1}{K} \int_{M}{F^{2}(\nabla u)}d\fm.
\end{equation}
\end{theorem}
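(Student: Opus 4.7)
The plan is to derive this Poincar\'e inequality from the logarithmic Sobolev inequality stated immediately above via the classical Rothaus-type linearization around the constant density. The idea is that testing log-Sobolev against the one-parameter family of densities $\rho_{\epsilon}=1+\epsilon u$ (with $u$ bounded and of mean zero) and Taylor expanding in $\epsilon$ kills the linear contribution, while the quadratic contribution reproduces precisely $\textup{Var}_{\fm}(u)$ on the left and $\int F^{2}(\nabla u)\, d\fm$ on the right, with higher-order terms vanishing in the limit.

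First I would reduce to the case of a bounded, mean-zero function. Since $\fm(M)=1$ and $u\in L^{2}(\fm)$, the average $\bar u := \int u\, d\fm$ is finite, and both sides of \eqref{eq:Poin} are unchanged if $u$ is replaced by $u-\bar u$; so I may assume $\int u\, d\fm=0$. To arrange boundedness, I would truncate by $u_{n}:=\max\{-n,\min\{u,n\}\}$. Each $u_{n}$ is bounded and locally Lipschitz, with $du_{n}=du$ on $\{|u|<n\}$ and $du_{n}=0$ elsewhere, so $F(\nabla u_{n})\le F(\nabla u)$ almost everywhere. Dominated convergence then lets me pass to the limit $n\to\infty$ on both sides once the inequality is established for each $u_{n}$.

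With $u$ bounded and mean zero, for $0<\epsilon<1/\|u\|_{\infty}$ the function $\rho_{\epsilon}:=1+\epsilon u$ is a strictly positive, locally Lipschitz probability density. Because $F^{*}$ (and hence the Legendre transform $\mathcal{L}^{*}$) is positively $1$-homogeneous, $F^{2}(\nabla\rho_{\epsilon})=\epsilon^{2}F^{2}(\nabla u)$, and the logarithmic Sobolev inequality reads
\begin{equation*}
\int_{M}(1+\epsilon u)\log(1+\epsilon u)\, d\fm \;\le\; \frac{\epsilon^{2}}{2K}\int_{M}\frac{F^{2}(\nabla u)}{1+\epsilon u}\, d\fm.
\end{equation*}
Using the expansion $(1+s)\log(1+s)=s+s^{2}/2+O(s^{3})$, uniform for $|s|\le\epsilon\|u\|_{\infty}<1$, together with $\int u\, d\fm=0$, the left-hand side becomes $\tfrac{\epsilon^{2}}{2}\int u^{2}\, d\fm+O(\epsilon^{3})$. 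Dividing through by $\epsilon^{2}/2$, sending $\epsilon\downarrow 0$, and applying dominated convergence to the right-hand side, I would conclude $\int u^{2}\, d\fm\le K^{-1}\int F^{2}(\nabla u)\, d\fm$; the general case follows from the truncation step.

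The subtlety specific to the Finsler setting is that $\mathcal{L}^{*}$ is only \emph{positively} homogeneous, so the clean identity $F^{2}(\nabla\rho_{\epsilon})=\epsilon^{2}F^{2}(\nabla u)$ genuinely requires $\epsilon>0$. A two-sided perturbation would involve $F^{2}(-\nabla u)$, which in general differs from $F^{2}(\nabla u)$ under non-reversibility, so a one-sided linearization is essential. Since the leading nontrivial term in the Taylor expansion is even in $\epsilon$, this one-sided argument still yields the symmetric conclusion, and no control of the reverse Finsler structure is needed; this is the only point at which Finsler-specific care has to be exercised.
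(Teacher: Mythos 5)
Your proof is correct, but note that the paper does not actually prove Theorem~\ref{th:Poin}: it is stated, together with the logarithmic Sobolev inequality, as a known consequence of $\textup{Ric}_{\infty}\ge K>0$ with a citation to \cite{Oint,Obook}, where the standard derivation goes through the nonlinear heat semigroup and the Bochner--Weitzenb\"ock formula (an $L^2$-gradient estimate for the linearized semigroup) rather than through linearization of the entropy inequality. Your Rothaus-type argument---centering, truncating to $u_n:=\max\{-n,\min\{u,n\}\}$, perturbing with $\rho_\epsilon=1+\epsilon u_n$ for $\epsilon\in(0,\|u_n\|_{\infty}^{-1})$, Taylor expanding $(1+s)\log(1+s)$, and passing to the limit---is a valid and self-contained alternative that relies only on the log-Sobolev inequality already displayed in the paper and on the dominated/monotone convergence passages guaranteed by the $H_0^1$ hypothesis (which gives $u\in L^2(\fm)$ and $\int_M F^2(\nabla u)\,d\fm<\infty$). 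What it buys is independence from the heat-flow machinery. Your observation about positive homogeneity is the genuinely Finsler-specific point and is handled correctly: $\mathcal{L}^*$ and $F$ are only positively $1$-homogeneous, so $F(\nabla\rho_\epsilon)=\epsilon F(\nabla u_n)$ requires $\epsilon>0$ and the limit must be one-sided $\epsilon\downarrow 0$; since the leading nontrivial term in the expansion is quadratic in $\epsilon$, this still produces the bound in terms of $\int_M F^2(\nabla u)\,d\fm$ (not the reversed $\int_M F^2(-\nabla u)\,d\fm$), exactly as the statement requires, and no finite-reversibility hypothesis is needed---consistent with the theorem, which does not assume $\Lambda_F<\infty$.
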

Poincar\'e inequality implies the spectral gap on Finsler manifolds.
\begin{theorem}[Spectral gap]\label{th:spec}
Assume that $(M,F,\fm)$ is forward and backward complete with  $\textup{Ric}_{\infty} \geq K > 0$ and $\fm(M) = 1$. We denote by $\lambda_{1}$ the first nonzero eigenvalue of the nonlinear Laplacian $-\Delta$. Then: \begin{equation}\label{eq:spec}
\lambda_{1} \geq K.
\end{equation}
\end{theorem}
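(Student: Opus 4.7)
The plan is to derive the spectral gap bound directly from the Poincar\'e inequality (Theorem \ref{th:Poin}), since the variational characterization of the first nonzero eigenvalue essentially matches the Rayleigh quotient controlled by that inequality. First, I would fix an eigenfunction $u$ of $-\Delta$ with eigenvalue $\lambda_{1}$, i.e.\ $-\Delta u = \lambda_{1} u$, with $u \not\equiv 0$. Because $\fm(M) = 1 < \infty$ and the manifold is forward and backward complete with $\text{Ric}_{\infty} \geq K > 0$, Lemma 11.4 of \cite{Obook} (cited in the paper) gives $H_{0}^{1}(M) = H^{1}(M)$ and $1 \in H_{0}^{1}(M)$, so that constant functions are admissible test functions. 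Testing the equation $-\Delta u = \lambda_{1} u$ against the constant $1$ via the weak definition of $\Delta$ yields $\lambda_{1}\int_{M} u \, d\fm = 0$, and since $\lambda_{1} \neq 0$ we obtain $\int_{M} u \, d\fm = 0$. Consequently, $\textup{Var}_{\fm}(u) = \int_{M} u^{2}\, d\fm$.

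Next, I would test the eigenvalue equation against $u$ itself. By the weak definition of the nonlinear Laplacian,
\begin{equation*}
\lambda_{1}\int_{M} u^{2}\, d\fm = -\int_{M} u\, \Delta u\, d\fm = \int_{M} du(\nabla u)\, d\fm = \int_{M} F^{2}(\nabla u)\, d\fm,
\end{equation*}
where the last identity uses the compatibility $du(\nabla u) = F^{*}(du)^{2} = F^{2}(\nabla u)$ built into the Legendre transform. Applying the Poincar\'e inequality \eqref{eq:Poin} to $u \in H_{0}^{1}(M)$ gives
\begin{equation*}
\int_{M} u^{2}\, d\fm \;=\; \textup{Var}_{\fm}(u) \;\leq\; \frac{1}{K}\int_{M} F^{2}(\nabla u)\, d\fm \;=\; \frac{\lambda_{1}}{K}\int_{M} u^{2}\, d\fm.
\end{equation*}
Dividing by $\int_{M} u^{2}\, d\fm > 0$ yields $\lambda_{1} \geq K$, as desired.

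The main obstacle I anticipate is justifying the integration-by-parts step in the nonlinear Finsler setting, in particular ensuring that $u$ has enough regularity (say $u \in H_{0}^{1}(M)$ and $\Delta u \in L^{2}(M)$) for the identity $\int_{M} u(-\Delta u)\, d\fm = \int_{M} F^{2}(\nabla u)\, d\fm$ to hold and for $u$ to be a legitimate test function against itself. These are standard for eigenfunctions of the nonlinear Laplacian in this framework (cf.\ \cite{OS1}), but should be invoked explicitly. Apart from that, the argument is essentially a one-line deduction from the Poincar\'e inequality.
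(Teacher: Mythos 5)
Your argument is correct and is essentially the paper's own route: the paper derives the spectral gap directly from the Poincar\'e inequality (Theorem \ref{th:Poin}) via the Rayleigh-quotient computation you carry out. Note only that the paper's definition of an eigenvalue already requires $u \in H_{0}^{1}(M)$ with $\int_{M} u\, d\fm = 0$, so your preliminary step of testing against constants is not needed.
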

Precisely, we say that $\lambda > 0$ is an eigenvalue of $-\Delta$ if there is a nonconstant function $u\in H_{0}^{1}(M)$ with $\int_{m}u d\fm = 0$ such that $\Delta u + \lambda u = 0$. These inequalities above is sharp as equality holds on the $1$-dimensional Gaussian space. It is clear that the equality of \eqref{eq:spec} implies the equality of \eqref{eq:Poin}. The reverse direction could be obtained by the following lemma.
\begin{lemma}\label{lem:Poin}
Let $H(u):= \frac{1}{K}\int_{M}{F^{2}(\nabla u)}d\fm - \textup{Var}_{\fm}(u)$. If a function $u\in H_{0}^{1}(M)$ with $\int_{M} u d\fm = 0$ satisfies $H(u) = 0$, then $\Delta u = -Ku$ in weak sense.
\end{lemma}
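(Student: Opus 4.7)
The plan is to exploit that $u$ is a global minimizer of $H$ over $H_0^1(M)$ and read off the Euler--Lagrange equation from the vanishing of the first variation. Indeed, Theorem~\ref{th:Poin} gives $H(v) \geq 0$ for every $v \in H_0^1(M)$, while by hypothesis $H(u) = 0$. Thus, for any test function $\phi \in \mathcal{C}_c^{\infty}(M)$, the map $\epsilon \mapsto H(u+\epsilon\phi)$ attains its minimum at $\epsilon = 0$, so its derivative there must vanish.

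The key computation is this first variation. For the energy term, on the essential domain $M_u$ the function $(F^*)^2$ is smooth and pointwise differentiation gives
\begin{equation*}
\left.\frac{d}{d\epsilon}\right|_{\epsilon=0} (F^*)^2(du + \epsilon\, d\phi) = 2\, d\phi(\nabla u),
\end{equation*}
while on $M \setminus M_u$ the $2$-homogeneity $(F^*)^2(\epsilon\, d\phi) = \epsilon^2 (F^*)^2(d\phi)$ makes the first-order contribution vanish. Combined with the elementary derivatives of the variance terms, and using $\int_M u\, d\fm = 0$ to kill the cross-term $2(\int_M u\, d\fm)(\int_M \phi\, d\fm)$, one obtains
\begin{equation*}
\left.\frac{d}{d\epsilon}\right|_{\epsilon=0} H(u+\epsilon\phi) = \frac{2}{K}\int_M d\phi(\nabla u)\, d\fm - 2\int_M u\phi\, d\fm.
\end{equation*}

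Setting this to zero for every $\phi \in \mathcal{C}_c^{\infty}(M)$ yields $\int_M d\phi(\nabla u)\, d\fm = K\int_M u\phi\, d\fm$, which by the weak definition of the nonlinear Laplacian rearranges to $\int_M \phi\, \Delta u\, d\fm = -K\int_M u\phi\, d\fm$ for all such $\phi$. This is precisely the statement that $\Delta u = -Ku$ in the weak sense.

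I expect the main technical obstacle to be rigorously interchanging differentiation and integration in the Finsler energy term, since $(F^*)^2$ is only $\mathcal{C}^1$ at the zero section and $F$ need not be reversible. The contribution from $\{du = 0\}$ is harmless because it is of order $\epsilon^2$ by $2$-homogeneity; on $M_u$, the smoothness of $(F^*)^2$ together with the compact support and boundedness of $d\phi$ permits a dominated convergence argument (using fiber-wise convexity of $(F^*)^2$ to bound the difference quotients uniformly in $\epsilon$). Note that reversibility of $F$ is not required here, since both $\phi$ and $-\phi$ are admissible perturbations.
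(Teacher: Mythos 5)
Your proposal is correct and follows essentially the same route as the paper: use the Poincar\'e inequality to see that $u$ is a global minimizer of $H$, compute the first variation, and read off $\int_M d\phi(\nabla u)\,d\fm = K\int_M u\phi\,d\fm$, i.e.\ $\Delta u = -Ku$ weakly. The only difference is that the differentiability of the energy term, which you sketch via $2$-homogeneity off $M_u$ and dominated convergence on $M_u$, is exactly what the paper handles by citing Lemma 11.9 in \cite{Obook}.
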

\begin{proof}
Given $\phi\in H_{0}^{1}(M)$, we have
\begin{align*}
    \frac{1}{2}\frac{d}{dt} \big[H(u+t\phi)\big]_{t=0} &= \frac{1}{K}\int_{M} d\phi(\nabla u)d \fm - \frac{1}{2}\frac{d}{dt} \big[\textup{Var}_{\fm}(u+t\phi)\big]_{t=0} \\
    &= \frac{1}{K}\int_{M} d\phi(\nabla u)d \fm - \int_{M} u\phi d\fm,
\end{align*}
where the first equality follows from Lemma 11.9 in \cite{Obook}. Since $u$ is a minimizer of $H(u)$, $\int_{M} d\phi(\nabla u)d \fm = K \int_{M} u \phi d\fm$ necessarily holds for all $\phi\in H_{0}^{1}(M)$. Hence, $\Delta u = -Ku$ in weak sense.
$\qedd$
\end{proof}

On Finsler manifolds, the \emph{Minkowski exterior content} is defined by
\[ \fm^{+}(A) := \liminf_{\ve \to 0} \frac{\fm(B^{+}(A,\ve) \setminus A)}{\ve} \]
for a Borel set $A$, where $B^{+}(A,\ve) := \{y\in M | \inf_{x\in A} d(x,y) < \ve \}$ denote the forward open $\ve$-neighborhood of $A$. Assuming $\fm(M) = 1$, we define the \emph{isoperimetric profile} by
 \[ \cI_{(M,F,\fm)}(\theta) := \inf \{ \fm^{+}(A) \,|\, A \subset M,\, \fm(A)=\theta \} \]
for $\theta \in (0,1)$, where $A$ runs over all Borel sets with $\fm(A)=\theta$. The isoperimetric profile of a $\textup{CD}(K,N)$-Finsler manifold is bounded from below by a functional as stated in the next theorem (\cite{Oneedle}).

\begin{theorem}[Isoperimetric inequality]
Let $(M,F,\fm)$ be a forward and backward complete weighted Finsler manifold of dimension $n\geq2$ and $\Lambda_{F} < \infty$. Assume $\fm(M)=1$, $\textup{Ric}_{N} \geq K$ and $\textup{diam}(M) \leq D$ for some $K \in \mathbb{R}$, $N\in (-\infty,0]\cap[n,\infty)$ and $D\in(0,\infty]$. Then we have
\[ \cI_{(M,F,\fm)}(\theta) \geq \Lambda_{F}^{-1}\cdot\cI_{K,N,D}(\theta),\]
where $\cI_{K,N,D}$ is a functional depending only on $K,N,D$.
\end{theorem}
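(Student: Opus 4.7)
The plan is to adapt the needle decomposition (localization) technique, originally developed by Klartag in the Riemannian case and extended to (non-reversible) Finsler manifolds in the cited work \cite{Oneedle}, so as to reduce the $n$-dimensional isoperimetric problem to a family of one-dimensional problems along transport geodesics. The skeleton has four steps, which I describe in the order I would carry them out.

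First, fix a Borel set $A \subset M$ with $\fm(A) = \theta$ and introduce the signed measure $\mu := (\mathbf{1}_{A} - \theta)\,\fm$, which has total mass zero. By Kantorovich duality applied with the asymmetric Finsler distance $d$, there exists a $1$-Lipschitz function $u : M \to \R$ that is a Kantorovich potential for the $L^{1}$-transport problem between $\mu^{+}$ and $\mu^{-}$. Second, partition $M$ (up to an $\fm$-negligible set) into maximal \emph{transport rays} of $u$: these are geodesic segments along which $u$ increases with unit slope. Disintegrating $\fm$ with respect to this partition yields conditional measures $\fm_{\alpha}$ supported on each ray $X_{\alpha}$; using the Finsler Bochner/Jacobi machinery behind $\textup{Ric}_{N} \ge K$, each one-dimensional weighted interval $(X_{\alpha}, |\cdot|, \fm_{\alpha})$ inherits a $\textup{CD}(K,N)$ bound with diameter at most $D$, and the mass balance $\fm_{\alpha}(A \cap X_{\alpha}) = \theta\,\fm_{\alpha}(X_{\alpha})$ is preserved on almost every needle.

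Third, on each one-dimensional needle the sharp isoperimetric inequality for $\textup{CD}(K,N,D)$ densities — essentially a Bonnet--Myers/Bakry--Ledoux type computation on log-concave weighted intervals — gives
\[
\fm_{\alpha}^{+}\bigl(A \cap X_{\alpha}\bigr) \;\ge\; \cI_{K,N,D}(\theta),
\]
where $\cI_{K,N,D}$ is the explicit model profile of the Euclidean/Gaussian/sphere comparison space. Fourth, integrate this pointwise bound against the quotient measure on the space of needles. The only genuinely Finslerian step is that the forward Minkowski content $\fm^{+}(A)$ on $M$ controls the transverse content along needles only up to the reversibility constant: because $d$ is asymmetric, the forward $\ve$-tube in $M$ may be smaller than the natural two-sided $\ve$-tube along a needle by a factor $\Lambda_{F}$. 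This is precisely where the prefactor $\Lambda_{F}^{-1}$ appears, and it disappears in the reversible case $\Lambda_{F} = 1$.

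The main obstacle, and the whole reason the Finsler extension is nontrivial, is the construction in step two: one must show that $L^{1}$-optimal transport with respect to an asymmetric cost still produces a measurable partition into geodesic needles whose conditional densities are smooth enough (log-concave modulo the $N$-parameter) to support a $\textup{CD}(K,N)$ condition. This requires careful handling of the non-smoothness of $F$ on the zero section, of the ray structure under forward/backward completeness, and of the orientation of each ray (each needle inherits a forward orientation from $\nabla u$, which is where $\Lambda_{F}$ reenters). Once this disintegration theorem is in hand — and it is the content of \cite{Oneedle} — steps one, three, and four are standard.
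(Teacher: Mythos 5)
Your proposal is correct and follows exactly the route the paper relies on: the paper does not prove this theorem itself but cites Ohta's needle decomposition result \cite{Oneedle} (recalled later as Theorem \ref{th:Ndl}), which reduces the isoperimetric problem to one-dimensional $\textup{CD}(K,N)$ needles with the model profile $\cI_{K,N,D}$ of Milman \cite{Mi1,Mi2}, the reversibility constant $\Lambda_{F}^{-1}$ entering precisely through the comparison of the asymmetric forward Minkowski content with the symmetric one-dimensional content along each needle. Your sketch, with the disintegration construction deferred to \cite{Oneedle}, is essentially that argument.
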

For precise formula of $\cI_{K,N,D}$, please refer to \cite{Mi1,Mi2}. Here, we give the definition of $\cI_{K,N,D}$ on the case $K>0$, $N = \infty $ and $D = \infty$ (Bakry–Ledoux isoperimetric inequality), where the model space is the Gaussian space:
\[ \cI_{K,\infty,\infty}(\theta) =\sqrt{\frac{K}{2\pi}} \e^{-Ka_{\theta}^2/2}, \qquad
 \text{where}\ \ \sqrt{\frac{K}{2\pi}} \int_{-\infty}^{a_{\theta}} \e^{-Kt^2/2} \,dt =\theta. \]

On weighted Riemannian manifolds, the equivalence of the equality cases of the sharp spectral gap with logarithmic Sobolev (\cite{OT}) and isoperimetric inequalities (\cite{Ma2}) was shown via a localization technique called \emph{needle decomposition} (\cite{Kl}). We will discuss this phenomenon on reversible Finsler manifolds ($\Lambda_{F} = 1$) in Section 4.

\section{Rigidity of the spectral gap}

\subsection{Diffeomorphic splitting}
In this subsection, we investigate the rigidity problem of the sharp spectral gap on weighted Finsler manifolds of $\textup{Ric}_{\infty} \geq K > 0$. Our approach would follow the proof of Poincar\'e inequality via the Bochner formula. We add a regularity condition of the eigenfunction to avoid technical problems arising from the non-compactness.

\begin{theorem}[Diffeomorphic splitting]\label{th:main}
Let $ (M, F, \fm)$ be a complete weighted Finsler manifold of dimension $n\geq 2$ satisfying $\Lambda_{F} < \infty$, $\textup{Ric}_{\infty} \geq K > 0$ and $\fm(M) = 1$. Assume that there exists a nonconstant function $u \in H_{0}^{1}(M)$ with $\int_{M} u d\fm = 0$ such that $\Delta u + Ku = 0$ in weak sense. We assume furthermore
\begin{equation}\label{eq:technical}
    F(\nabla^{\nabla u}[F(\nabla u)]) \in L^{2}(M).
\end{equation}
Then $(M,g_{\nabla u})$ is isometric to the product space $\Sigma\times\mathbb{R}$ with $\Sigma = u^{-1}(0)$. Moreover, the measure $\fm$ splits as $\fm_{\Sigma}\otimes \gamma_{K}$ with $\gamma_{K}(t) = \textup{e}^{-Kt^2/2}dt$ is the Gaussian measure on $\mathbb{R}$.
\end{theorem}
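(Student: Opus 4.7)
The plan is to apply the integrated Bochner formula \eqref{eq:intB} to the eigenfunction $u$ with test functions tending to the constant $1$. Substituting $\Delta u = -Ku$ gives $d(\Delta u)(\nabla u)=-KF^2(\nabla u)$, so the identity becomes
\[ -\int_{M} d\phi\Big(\nabla^{\nabla u}\Big[\tfrac{F^2(\nabla u)}{2}\Big]\Big)\,d\fm = \int_{M} \phi\,\Big(\|\nabla^2 u\|_{\textup{HS}(\nabla u)}^{2} + \textup{Ric}_\infty(\nabla u) - KF^2(\nabla u)\Big)\,d\fm. \]
Since $\fm(M)=1$ and $\Lambda_F<\infty$, Lemma 11.4 of \cite{Obook} places the constant $1$ in $H_0^1(M)$, so there exist cutoffs $\phi_n\in\mathcal{C}_c^\infty(M)$ with $\phi_n\to 1$ in the $H^1$-norm. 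Using the identity $\nabla^{\nabla u}[F^2(\nabla u)/2]=F(\nabla u)\,\nabla^{\nabla u}[F(\nabla u)]$, the hypothesis \eqref{eq:technical}, and a Cauchy--Schwarz estimate against $F^*(d\phi_n)\to 0$ in $L^2$, the left-hand side tends to $0$. The right-hand side integrand is a sum of two nonnegative quantities (by $\textup{Ric}_\infty\geq K$), so each must vanish almost everywhere: $\nabla^2 u\equiv 0$ on $M_u$ and $\textup{Ric}_\infty(\nabla u)\equiv KF^2(\nabla u)$ a.e. I expect justifying this passage to the limit, for which \eqref{eq:technical} is essential, to be the principal technical obstacle.

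The vanishing Hessian drives the remainder. By metric compatibility with $g_{\nabla u}$,
\[ d[F^2(\nabla u)](v) = 2\,g_{\nabla u}\big(\nabla^2 u(v),\,\nabla u\big) = 0, \]
so $F(\nabla u)$ is locally constant on $M_u$. Continuity of $F(\nabla u)$ on $M$, connectedness of $M$, and non-triviality of $u$ then force $M_u=M$ and $F(\nabla u)\equiv c$ for some constant $c>0$. Setting $V:=\nabla u/c^2$ one has $du(V)\equiv 1$ and $D_V^V V=0$, so integral curves of $V$ are geodesics along which $u$ grows linearly, $u(\psi_t(x))=u(x)+t$, where $\psi_t$ denotes the flow of $V$. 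Forward and backward completeness (valid under $\Lambda_F<\infty$) makes $\psi_t$ global.

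Define the smooth hypersurface $\Sigma:=u^{-1}(0)$ and the candidate splitting map $\Phi:\Sigma\times\mathbb{R}\to M$, $\Phi(x,s):=\psi_{cs}(x)$, so that $s$ is arc length along the geodesic $s\mapsto\Phi(x,s)$. Bijectivity of $\Phi$ follows from $u(\Phi(x,s))=cs$ and global invertibility of $\psi_t$. To verify that $\Phi^* g_{\nabla u}$ is the product metric, compute $g_{\nabla u}(\partial_s,\partial_s)=F^2(\nabla u)/c^2=1$ and $g_{\nabla u}(\partial_s,w)=du(w)/c=0$ whenever $w$ is tangent to a level set of $u$. For such $w$ extended by Lie drag along $V$ (so $[V,w]=0$),
\[ D_V^{\nabla u}w \;=\; D_w^{\nabla u}V \;=\; \tfrac{1}{c^2}\nabla^2 u(w) \;=\; 0, \]
so $g_{\nabla u}(w,w)$ is preserved along the flow, giving the product Riemannian structure. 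This is the Finsler analogue of de Rham splitting, paralleling Ohta's Cheeger--Gromoll theorem \cite{Osplit}.

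For the measure, decompose $\fm=\mathrm{e}^{-\Psi}\textup{vol}_{g_{\nabla u}}$. Since $\nabla^2 u\equiv 0$ kills the Riemannian Laplace term and $\Delta u=-d\Psi(\nabla u)$, the eigenvalue equation yields $d\Psi(\nabla u)=Ku$. Integrating along $\psi_t$ from $x\in\Sigma$ gives
\[ \Psi(\psi_t(x))-\Psi_\Sigma(x) \;=\; \int_0^t \tfrac{K}{c^2}\,u(\psi_\tau(x))\,d\tau \;=\; \tfrac{Kt^2}{2c^2}, \]
which in the arc-length variable $s=t/c$ reads $\Psi=\Psi_\Sigma+Ks^2/2$. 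Combined with the splitting of $\textup{vol}_{g_{\nabla u}}$ as $\textup{vol}_\Sigma\otimes ds$, this produces $\fm=\fm_\Sigma\otimes\gamma_K$ with $\fm_\Sigma=\mathrm{e}^{-\Psi_\Sigma}\textup{vol}_\Sigma$, completing the identification.
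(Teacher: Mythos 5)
Your proposal follows essentially the same route as the paper: apply the integrated Bochner formula \eqref{eq:intB} with cutoffs, use \eqref{eq:technical} to make the left-hand side vanish in the limit, deduce $\nabla^2u\equiv 0$ on $M_u$ and $\textup{Ric}_\infty(\nabla u)=KF^2(\nabla u)$, conclude that $F(\nabla u)$ is a positive constant so that $\nabla u$ is a smooth parallel (Killing) field for $g_{\nabla u}$, split $(M,g_{\nabla u})$ along its flow, and obtain the measure splitting from $d\Psi(\nabla u)=Ku$. Your pointwise substitution $d(\Delta u)(\nabla u)=-KF^2(\nabla u)$ (in place of the paper's integration by parts $\int_M d(\Delta u)(\nabla u)\,d\fm=-\int_M(\Delta u)^2d\fm$) and your explicit verification of the product metric via Lie-dragged vector fields are equivalent to the paper's shorter statements.

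However, the limiting step as you justify it does not go through. Knowing only that $\phi_n\to 1$ in $H^1$, i.e. $F^*(d\phi_n)\to 0$ in $L^2$, is not enough: the quantity you pair against, $F(\nabla u)\,F(\nabla^{\nabla u}[F(\nabla u)])$, is a product of two $L^2$ functions and is therefore only known to lie in $L^1$, so Cauchy--Schwarz would require either that product in $L^2$ or $F^*(d\phi_n)\to 0$ in $L^\infty$, neither of which is available. The paper avoids this by choosing distance-based cutoffs $\phi_R$ with $0\le\phi_R\le1$, $F(\nabla\phi_R)\le 1$ and $d\phi_R$ supported in $B^-_{R+1}\setminus B^-_R$, so that
\[
\Big|\int_M d\phi_R\Big(\nabla^{\nabla u}\Big[\tfrac{F^2(\nabla u)}{2}\Big]\Big)d\fm\Big|
\le \Lambda_F\Big(\int_{M\setminus B^-_R}F^2(\nabla u)\,d\fm\Big)^{1/2}\Big(\int_{M\setminus B^-_R}F^2(\nabla^{\nabla u}[F(\nabla u)])\,d\fm\Big)^{1/2}\longrightarrow 0,
\]
using that both integrands are globally integrable and the annuli escape to infinity; your argument is repaired by using such cutoffs (which in particular realize the $H^1$-approximation of the constant $1$), but as stated it is incomplete. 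A second, smaller omission: to invoke \eqref{eq:intB} at all you must first secure $u\in H^2_{\textup{loc}}(M)\cap\mathcal{C}^1(M)$ and $\Delta u\in H^1_{\textup{loc}}(M)$; the paper obtains this by noting that $\mathrm{e}^{-Kt}u$ solves the nonlinear heat equation and appealing to the regularity theory of \cite{OS1}, a point your write-up passes over silently.
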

\begin{proof}
We note that $e^{-Kt} u$ is a solution to the heat equation. Thus $u$ enjoys the regularities  $u \in H^{2}_{\textup{loc}}(M)\cap \mathcal{C}^{1}(M)$ and $\Delta u \in H^{1}_{\textup{loc}}(M)$ (see \cite{OS1}).
By an argument in Theorem 1.3 in \cite{CS} based on the improved Bochner inequality, $M$ must be non-compact. 
Given a fixed point $p\in M$, let $B^{-}_{R} := \{y\in M | d(y,p) < R \}$ denote the backward ball of radius $R>0$. Put $\phi_{R}(x) := \max \{1-d(x,B^{-}_{R}),0 \}$ where we set $d(x,B^{-}_{R}) := \inf_{y\in B^{-}_{R}} d(x,y)$, we have $\phi_{R}(x) = 1$ for $x\in B^{-}_{R}$ and $\phi_{R}(x) = 0$ for $x\in M\setminus B^{-}_{R+1}$. Moreover, we also have $F(\nabla \phi_{R}) \leq 1$ almost everywhere on $B^{-}_{R+1}\setminus B^{-}_{R}$. Observe that $\phi_{R}\in H_{c}^{1}(M)$, \eqref{eq:intB} and the weighted Ricci curvature bound imply
\begin{align*}\label{eq:BochCom}
-\int_{M} d\phi_{R} \Big( \nabla^{\nabla u} \Big[ \frac{F^{2}(\nabla u)}{2} \Big]\Big) d\fm&= \int_{M} \phi_{R} \big( d(\Delta u)(\nabla u)  + \textup{Ric}_{\infty}(\nabla u) + ||\nabla^{2}  u||^{2}_{\textup{HS}(\nabla u)}\big)d\fm \\
&\geq \int_{M} \phi_{R} \big( d(\Delta u)(\nabla u)  + KF^{2}(\nabla u) + ||\nabla^{2}  u||^{2}_{\textup{HS}(\nabla u)}\big)d\fm \\
&\geq  \int_{M} \phi_{R} \big( d(\Delta u)(\nabla u)  + KF^{2}(\nabla u)\big)d\fm. 
\end{align*}
We note that $0\leq \phi_{R}\leq 1$ and $\lim_{R\rightarrow\infty}\phi_{R}(x)=1$ point-wise on $M_{u}$ and all the integrands above vanish almost everywhere on $M\setminus M_{u}$ thanks to Lemma 12.12 in \cite{Obook}. The right-hand side converges to 
\[\int_{M} \big( d(\Delta u)(\nabla u)  + KF^{2}(\nabla u)\big)d\fm \]
as $R\rightarrow \infty$.
Concerning the left-hand side, we have
\begin{align*}
    \Bigg| -\int_{M}d\phi_{R}\Big( \nabla^{\nabla u} \Big[ \frac{F^{2}(\nabla u)}{2} \Big]\Big)d\fm \Bigg| &\leq
     \Lambda_{F}\int_{M\setminus B^{-}_{R}}F^{*}(d\phi_{R})F\Big( \nabla^{\nabla u} \Big[ \frac{F^{2}(\nabla u)}{2} \Big]\Big)d\fm \\
     &\leq \Lambda_{F}\int_{M\setminus B^{-}_{R}}F\Big( \nabla^{\nabla u} \Big[ \frac{F^{2}(\nabla u)}{2} \Big]\Big)d\fm
     \\
     &\leq \Lambda_{F}\int_{M\setminus B^{-}_{R}} F(\nabla u) F(\nabla^{\nabla u}[F(\nabla u)])d\fm \\
     &\leq  \Lambda_{F}\bigg(\int_{M\setminus B^{-}_{R}} F^{2}(\nabla u) d\fm\bigg)^{1/2} \bigg(\int_{M\setminus B^{-}_{R}} F^{2}(\nabla^{\nabla u}[F(\nabla u)])d\fm \bigg)^{1/2}
\end{align*}
tends to $0$ as $R\rightarrow \infty$ by \eqref{eq:technical} and $M\setminus B^{-}_{R}$ decreases to null set. We derived the last inequality by Holder inequality. We obtain
\begin{align*}
    0 \geq \int_{M} \big( d(\Delta u)(\nabla u)  + KF^{2}(\nabla u)\big)d\fm &= \int_{M} d(\Delta u)(\nabla u)d\fm + K \int_{M}F^{2}(\nabla u)d\fm \\ &= -\int_{M} (\Delta u)^{2}d\fm + K \int_{M}F^{2}(\nabla u)d\fm  \\&= -K^2 \int_{M} u^2 d\fm + K\int_{M} F^{2}(\nabla u) d\fm \\ &\geq 0.
\end{align*}
Therefore, we have the followings:
\begin{enumerate}[(i)]
    \item $\nabla^2 u = 0$ on $M_{u}$,
    \item $\textup{Ric}_{\infty}(\nabla u) = K F^{2}(\nabla u).$
\end{enumerate}
Let $\eta$ be any $\mathcal{C}^{1}$-curve in $M_{u}$. We calculate the derivative of $F^2(\nabla u)$ along $\eta$ thanks to Lemma 4.8 in \cite{Obook}:
\begin{align*}
   \frac{d}{dt}[F^2(\nabla u)] &= \frac{d}{dt}[g_{\nabla u}(\nabla u,\nabla u)]\\ &= g_{\nabla u}\big(D_{\dot{\eta}}^{\nabla u}(\nabla u),\nabla u\big) + g_{\nabla u}\big(\nabla u,D_{\dot{\eta}}^{\nabla u}(\nabla u)\big) \\
   &= 0,
\end{align*}
where in the last equality we deduce from (i). By the positivity and continuity of $F(\nabla u)$, we obtain that $F(\nabla u)$ is constant on $M$. Hence $\nabla u$ is $\mathcal{C}^{\infty}$ and all integral curves of $\nabla u$ are geodesic. The latter implies $\textup{Ric}_{\infty}(\nabla u) = \textup{Ric}_{\infty}^{g_{\nabla u}}(\nabla u)$ (see Remark 9.12 in \cite{Obook}). Moreover, $\nabla u$ is a parallel (and hence Killing) vector field with respect to $ g_{\nabla u}$ by (i). Therefore, $(M,g_{\nabla u})$ splits isometrically to the product space $\Sigma\times\mathbb{R}$ with $\Sigma = u^{-1}(0)$ via one-parameter family of transformation $\varphi_{t}: M \rightarrow M$ for  $t\in\mathbb{R}$ with respect to $\nabla u$. We write $\varphi_{t}(x,s) = (x,s+t)$ in this product structure. For each $x_{0}\in \Sigma$, $\sigma(t):=(x_{0},t)$ is an integral curve of $\nabla u$.

Now we consider the splitting on the measure. Let $\Psi$ be the density function associated with $\nabla u$ as in Definition \ref{def:Ric}. Recall in the proof of Theorem 12.7 in \cite{Obook} that
\[ ||\nabla^{2} u||^{2}_{\textup{HS}(\nabla u)} \geq \frac{(\Delta u+d\Psi(\nabla u))^2}{n}.\]
Combining this with $\Delta u + Ku = 0$ we obtain $d\Psi(\nabla u) = Ku$. We obtain that $(\Psi\circ\sigma)''(t) = K$ and the measure $\fm$ splits as $\fm_{\Sigma}\otimes \gamma_{K}$, where $\gamma_{K}(t) = \textup{e}^{-Kt^2/2}dt$ is the Gaussian measure on $\mathbb{R}$ and $\fm_{\Sigma}(A):=\sqrt{K/(2\pi)} \fm(A \times \R)$ defined through the isometry $M = \Sigma\times\mathbb{R}$. Thus we have 
\[ (\Sigma\times\mathbb{R},\fm_{\Sigma}\otimes \gamma_{K})\ni(x_{0},t)\rightarrow\varphi_{t}(x_{0})\in (M,\fm)\]
is diffeomorphic and measure-preserving.
$\qedd$
\end{proof}

\begin{remark}
The diffeomorphic splitting phenomenon could be compared with the Cheeger--Gromoll splitting theorem in \cite{Osplit}. As we mentioned in the introduction, eigenfunction $u$ plays a similar role to the Busemann function in the proof. 
\end{remark}
\begin{remark}
The technical condition \eqref{eq:technical} is unnecessary for weighted Riemannian manifolds and \textup{RCD}-spaces. This condition also holds in compact Finsler manifolds by the $H^2$-regularity of the solution of the heat equation.
One might refer to Section $3.4$ in \cite{Oisop} for the discussion on non-compact regularity of the improved Bochner inequality. 
\end{remark}

\subsection{Isometric splitting in the Berwald case}
This subsection discusses the splitting phenomenon in the particular class of Berwald spaces by following the argument in Section 5 of \cite{Osplit}. Throughout this part, let $(M,F,\fm)$ be a complete Berwald space of $\textup{Ric}_{\infty} > K > 0$ and $\fm(M) = 1$. We consider the structure of the level set of the eigenfunction by the following lemma.

\begin{lemma}[Eigenfunction is affine]\label{lem:afin}
Let $u$ be the eigenfunction in Theorem \ref{th:main}. Then for any geodesic $\xi:[0,l]\rightarrow M$, we have $(u\circ\xi)''=0$. In particular, the level set $u^{-1}(t)$ is totally convex and geodesically complete for each $t\in\mathbb{R}$. 
\end{lemma}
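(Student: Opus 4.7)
The plan is to exploit the Berwald hypothesis---which makes the Chern connection reference-independent---to upgrade the pointwise identity $\nabla^2 u = 0$ from Theorem \ref{th:main} into a parallel-transport statement, and thereby prove that $u$ is affine along every geodesic.

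I would start by recording the extra facts about $u$ inherited from the proof of Theorem \ref{th:main}: $F(\nabla u)$ is a positive constant on $M$ (so $M_u = M$ and $\nabla u$ is a genuine $\mathcal{C}^\infty$-vector field on all of $M$), and $\nabla^2 u \equiv 0$. Because $(M,F)$ is Berwald, the connection coefficients $\Gamma^i_{jk}(w)$ do not depend on the reference vector $w$, so the identity $D^{\nabla u}_v(\nabla u) = 0$ becomes $D_v(\nabla u) = 0$ for every $v$; in other words $\nabla u$ is parallel with respect to the (now reference-free) Chern connection.

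Now fix any geodesic $\xi:[0,l]\to M$, so that $\dot\xi$ is likewise parallel along $\xi$ by the geodesic equation. The key structural property of Berwald spaces (see Section 6.3 in \cite{Obook}) is that parallel transport $P_t:T_{\xi(0)}M\to T_{\xi(t)}M$ along $\xi$ preserves the Finsler structure $F$, and hence also the fundamental tensor: $g_{P_t v}(P_t w_1, P_t w_2) = g_v(w_1,w_2)$ for all $v\ne 0$ and $w_1,w_2$. Since $\nabla u(\xi(t)) = P_t\bigl(\nabla u(\xi(0))\bigr)$ and $\dot\xi(t) = P_t(\dot\xi(0))$, and using the Legendre identity $du(\cdot) = g_{\nabla u}(\nabla u,\cdot)$ valid on $M_u = M$, I obtain
\[
(u\circ\xi)'(t) \,=\, du(\dot\xi(t)) \,=\, g_{\nabla u(\xi(t))}\!\bigl(\nabla u(\xi(t)),\dot\xi(t)\bigr) \,=\, g_{\nabla u(\xi(0))}\!\bigl(\nabla u(\xi(0)),\dot\xi(0)\bigr),
\]
which is independent of $t$. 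Hence $(u\circ\xi)'' \equiv 0$.

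The two consequences on level sets then follow immediately. For total convexity, any geodesic $\xi:[0,l]\to M$ joining two points of $u^{-1}(t)$ makes $u\circ\xi$ an affine function with equal endpoints, forcing $u\circ\xi\equiv t$ on $[0,l]$. For geodesic completeness, any geodesic $\xi$ with $\xi(0)\in u^{-1}(t)$ and $du(\dot\xi(0)) = 0$ satisfies $(u\circ\xi)'(0) = 0$ together with $(u\circ\xi)'' \equiv 0$, so $u\circ\xi\equiv t$; the forward and backward completeness of $M$ then extends $\xi$ to all of $\mathbb{R}$, with the image remaining inside $u^{-1}(t)$. The step requiring the most care is the invocation of the Berwald characterization that parallel transport is a linear isometry of $(T_xM,F)$; once this is cited, the remainder is routine.
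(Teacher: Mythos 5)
Your proof is correct and follows essentially the same route as the paper: both arguments combine $\nabla^2 u=0$, the constancy of $F(\nabla u)$, the geodesic equation, and the Berwald reference-independence of the covariant derivative to conclude that $du(\dot\xi)=g_{\nabla u}(\nabla u\circ\xi,\dot\xi)$ is constant along any geodesic, and then read off total convexity and geodesic completeness of the level sets exactly as you do. The only cosmetic difference is that the paper gets the constancy infinitesimally, differentiating $g_{\nabla u}(\nabla u\circ\xi,\dot\xi)$ via the almost metric-compatibility (Lemma 4.8 in \cite{Obook}), whereas you invoke the slightly heavier fact that parallel transport in a Berwald space is a linear isometry preserving $F$ and hence $g$; both are valid.
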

\begin{proof}
We deduce from $\nabla^2 u = 0$ in Theorem \ref{th:main} and Lemma 4.8 in \cite{Obook} that
\begin{align*}
    (u\circ\xi)'' &= \big[ g_{\nabla u}(\nabla u \circ \xi,\dot{\xi})\big]' \\
    &= g_{\nabla u} \big( D_{\dot{\xi}}(\nabla u \circ \xi),\dot{\xi}\big) + g_{\nabla u} (\nabla u \circ \xi, D_{\dot{\xi}}\dot{\xi}) \\
    &= 0.
\end{align*}
If $u(\xi(s)) = u(\xi(t))$ for some $s \neq t$ then $u\circ\xi$ is constant. Therefore, $u^{-1}(t)$ is totally convex and geodesically complete.
$\qedd$
\end{proof}
Now we obtain the isometric splititng on the affine structure of Berwald spaces.
\begin{theorem}[Isometric splitting]\label{th:split}
Let $ (M, F, \fm)$ be a complete weighted Finsler manifold of Berwald type of dimension $n\geq 2$ satisfying $\textup{Ric}_{\infty} \geq K > 0$ and $\fm(M) = 1$. Assume that there exists a function $u$ as in Theorem \ref{th:main}. Then we have
\begin{enumerate}[(i)]
    \item For all $t\in\mathbb{R}$, $\varphi_{t}$ is an measure-preserving isometry such that $\varphi_{t}(\Sigma) = M_{t}$ where $M_{t}:=u^{-1}(t)$. Moreover, $M = \bigsqcup_{t\in\mathbb{R}}M_{t}$,
    \item Let $\rho : M \rightarrow \Sigma$ be the natural projection $\rho(\varphi_{t}(x)) := x $ for $(x, t) \in \Sigma \times \mathbb{R}$. Then a curve $\xi: \mathbb{R} \rightarrow M$ is geodesic if and only if the projections $\rho(\xi) : \mathbb{R} \rightarrow \Sigma$ and $u(\xi) : \mathbb{R} \rightarrow \R$ are geodesic.
    \item $(\Sigma,F|_{T\Sigma},\fm_{\Sigma})$ is again a Berwald space and satisfies the Ricci curvature bound  $\textup{Ric}^{\Sigma}_{\infty} \geq K $,
\end{enumerate}
where $\Sigma$, $\varphi_{t}$ and $\fm_{\Sigma}$ are taken from proof of Theorem \ref{th:main}.
\end{theorem}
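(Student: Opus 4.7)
The plan is to upgrade the $g_{\nabla u}$-isometric splitting already established in Theorem \ref{th:main} to a genuine $F$-isometric splitting by exploiting the defining feature of a Berwald space: the Chern connection coefficients $\Gamma^{i}_{jk}$ are independent of the reference vector, so parallel transport along any curve is a well-defined linear map between tangent spaces and, by a classical fact (Section 6.3 of \cite{Obook}), it preserves the Minkowski norm $F$. Combined with the fact established in Theorem \ref{th:main} that $F(\nabla u)$ is a positive constant on all of $M$ (hence $M_u = M$), this will convert the Riemannian splitting with respect to $g_{\nabla u}$ into one with respect to $F$ itself.

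For (i), I would first verify that $\nabla u$ is parallel with respect to the Berwald connection. Theorem \ref{th:main} gives $\nabla^{2} u = D^{\nabla u}_{\cdot}(\nabla u) = 0$ on $M_u = M$, and since in the Berwald case the coefficients $\Gamma^{i}_{jk}$ do not depend on the reference vector, this in fact forces $D^{w}_{v}(\nabla u) = 0$ for every nonzero $w$ and every $v$. Thus the flow $\varphi_t$ of $\nabla u$ is a reparametrisation of Berwald parallel translation along the integral curves of $\nabla u$ and therefore preserves $F$, so each $\varphi_t$ is an $F$-isometry. Measure-preservation is inherited from the splitting $\fm = \fm_\Sigma \otimes \gamma_K$ of Theorem \ref{th:main}. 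Finally, differentiating gives $u(\varphi_t(x)) - u(x) = t\,F^{2}(\nabla u)$ along each integral curve, so after a trivial reparametrisation $\varphi_t(\Sigma) = M_t$ and $M = \bigsqcup_{t} M_t$.

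For (ii), the parallelity of $\nabla u$ decomposes the Berwald connection of $M$ as the sum of the induced connection on the $\Sigma$-slices and the flat connection along the $\nabla u$-fibres. Writing $\dot\xi = \dot{\rho(\xi)} + c(s)\,\nabla u$ with $c(s) := (u\circ\xi)'(s)/F^{2}(\nabla u)$ and expanding $D_{\dot\xi}\dot\xi$ using $D_{\dot\xi}(\nabla u) = 0$ yields the two decoupled equations $D_{\dot{\rho(\xi)}}\dot{\rho(\xi)} = 0$ in $\Sigma$ and $(u \circ \xi)'' = 0$ in $\R$, from which both implications follow. For (iii), Lemma \ref{lem:afin} ensures $\Sigma$ is totally convex and geodesically complete, hence totally geodesic, and the restriction of a reference-independent connection to a totally geodesic submanifold is again reference-independent, so $(\Sigma, F|_{T\Sigma})$ is a Berwald space. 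For the Ricci bound, the proof of Theorem \ref{th:main} yields $d\Psi(\nabla u) = Ku$, so in the product coordinates $\Psi(x,t) = \Psi_\Sigma(x) + Kt^{2}/2 + \text{const}$. Given a unit $v \in T_x\Sigma$, the $\Sigma$-geodesic $\eta$ with $\dot\eta(0) = v$ is also an $M$-geodesic by (ii), and the product structure gives $\textup{Ric}^{M}(v) = \textup{Ric}^{\Sigma}(v)$ together with $(\Psi\circ\eta)''(0) = (\Psi_\Sigma\circ\eta)''(0)$, upgrading $\textup{Ric}^{M}_\infty \geq K$ to $\textup{Ric}^{\Sigma}_\infty \geq K$.

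The main obstacle I anticipate is executing the first step cleanly: showing that $g_{\nabla u}$-parallelity of $\nabla u$ in fact forces Berwald-parallelity (through reference-vector independence), and then converting this into $F$-isometry of $\varphi_t$ via the fact that Berwald parallel transport preserves the Finsler norm. Once this Berwald-specific mechanism is in place, (ii) and (iii) reduce to routine decompositions of the connection and of the weight function under the product structure.
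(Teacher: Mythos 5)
Your overall plan coincides with the paper's: (i) is obtained by upgrading parallelism to $F$-parallelism and using that parallel transport in a Berwald space preserves $F$, (ii) by decoupling the geodesic equation, and (iii) by the induced Berwald structure on the totally geodesic slice together with a transfer of the curvature bound. Two of your steps, however, are asserted exactly where the content lies. In (i), the claim that the flow differential $d\varphi_t$ \emph{is} Berwald parallel translation along the integral curves does not follow merely from $D(\nabla u)=0$; you need the variation argument: for a curve $c$ with $\dot c(0)=v$ and $\alpha(s,t):=\varphi_t(c(s))$, torsion-freeness of the (reference-independent) Chern connection gives $D_{\partial_t}\partial_s\alpha = D_{\partial_s}\partial_t\alpha = D_{\partial_s}(\nabla u)=0$, so $d\varphi_t(v)$ is parallel along $t\mapsto\varphi_t(x)$. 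This is fixable (you flagged it yourself); the paper instead reads off $g_{\nabla u}$-parallelism of $d\varphi_t(v)$ from the Riemannian product structure of Theorem \ref{th:main} and converts it to $F$-parallelism via Lemma 2.3 of \cite{Osplit}, using that the integral curves of $\nabla u$ are geodesics.

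The more serious gap is in (iii). Writing ``the product structure gives $\textup{Ric}^{M}(v)=\textup{Ric}^{\Sigma}(v)$'' hides the actual issue: for a totally geodesic hypersurface the ambient and intrinsic Ricci curvatures differ by a flag-curvature term in the normal direction, and in the Finsler setting the weighted Ricci $\textup{Ric}_\infty(v_0)$ is computed via a geodesic extension $V$ of $v_0$ and the density of $\fm$ with respect to $\mathrm{vol}_{g_V}$ (Definition \ref{def:Ric}); it is \emph{not} computed from the density $\Psi$ relative to $g_{\nabla u}$, so your decomposition $\Psi=\Psi_\Sigma+Kt^2/2$ (obtained from $d\Psi(\nabla u)=Ku$) cannot simply be substituted into $\textup{Ric}^{\Sigma}_\infty$. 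The paper's mechanism is precisely the missing ingredient: extend $v_0$ to a geodesic field $V_0$ on a neighborhood $U_0\subset\Sigma$, lift it to $V(y_0,t):=(V_0(y_0),0)$ on $U_0\times\R$, observe that its integral curves are $M$-geodesics \emph{by part (ii)}, and then invoke the Riemannian characterization of the (weighted) curvature via the osculating metric $g_V$ (Theorem 5.12 in \cite{Obook}) together with the measure splitting $\fm=\fm_\Sigma\otimes\gamma_K$ to conclude $\textup{Ric}^{\Sigma}_\infty(v_0)=\textup{Ric}^{M}_\infty((v_0,0))\geq KF^2((v_0,0))$. Without this (or an equivalent argument showing the normal contribution vanishes because $\nabla u$ is parallel, both for the curvature term and for the weight), the curvature bound on $\Sigma$ is not established.
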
\label{th:ber}
\begin{proof}

(i) For all $v\in T_{x}M$, Theorem \ref{th:main} shows that $V(t):= d\varphi_{t}(v)$ is a parallel vector field with respect to $g_{\nabla u}$ along geodesic $\eta(t) := \varphi_{t}(x)$. Since all integral curves of $\nabla u$ are geodesic, Lemma 2.3 in \cite{Osplit} yields that $V$ is a parallel vector field with respect to $F$. Hence, $(d\varphi_{t})_{x}: T_{x}M \rightarrow T_{\varphi_{t}(x)}M$ is isometric.

(ii) Notice that $F(\nabla u)$ is constant by proof of Theorem \ref{th:main}. Given an open set $U_{0}\subset \Sigma$ with local coordinates $(x^{i})_{i=1}^{n-1}$, we consider the orthonormal coordinate $(x^{i})_{i=1}^{n}$ with respect to $g_{\nabla u}$ of $U_{0}\times \mathbb{R}\subset M$ such that $x^{n} = \frac{u}{F(\nabla u)}$ . Note that
\[ \frac{\partial g_{ij}}{\partial x^n}(\nabla u) = 0 \qquad \textup{for } i,j = 1,2,\ldots,n\]
by (i) and
\[ \frac{\partial g_{in}}{\partial x^j}(\nabla u) = 0 \qquad \textup{for } i,j = 1,2,\ldots,n\]
by $g_{\nabla u}(\nabla u, T\Sigma) = 0$ and $F(\nabla u)$ is constant. We find that $\Gamma^{i}_{jk}(\nabla u) =0$ unless $1\leq i,j,k \leq n-1$ by a similar calculation as in Proposition 5.2 in \cite{Osplit}. Then the geodesic equation on $M$ splits into geodesic equations on $\Sigma$ and $\mathbb{R}$ as

\[ D_{\dot{\xi}}^{\dot{\xi}} = \sum_{i=1}^{n-1} \Big\{\ddot{\xi}^{i} + \sum_{i,k=1}^{n-1}\Gamma^{i}_{jk}(\xi)\dot{\xi}^{j}\dot{\xi}^{k} \Big\}\frac{\partial}{\partial x^{i}}\Big|_{\xi} + \ddot{\xi}^{n}\frac{\partial}{\partial x^{n}}\Big|_{\xi}.\]

(iii) $(\Sigma,F|_{T\Sigma},\fm_{\Sigma})$ is a Berwald space by the total convexity of $\Sigma$ in Lemma \ref{lem:afin} (see Exercise 5.3.5 in \cite{BCS}). Now we estimate the curvature bound. Taking a nonzero vector $v_{0}\in T_{x_{0}}\Sigma\setminus{0}$, we extend it into a vector field $V_{0}$ on a neighborhood $U_{0}\subset\Sigma$ of $x_{0}$ such that all of its integral curves are geodesic. Let $V$ be the vector field on $U:=U_{0}\times\mathbb{R}$ given by $V(y_{0},t):=(V_{0}(y_{0}),0)$, 
all integral curves of $V$ are geodesic by (ii). By the Riemannian characterizations of curvatures (see Theorem 5.12 in \cite{Obook}), we have
\[ \textup{Ric}_{\infty}^{\Sigma}(v_{0}) = \textup{Ric}_{\infty}^{M}((v_{0},0)) \geq KF^{2}\big((v_{0},0)\big) = KF|_{\Sigma}^{2}(v_{0}),\]
where $(v_{0},0)\in T_{x_{0}}\Sigma\times T_{0}\mathbb{R} = T_{(x_{0},0)}M$ via the product structure $M = \Sigma\times\mathbb{R}$.
$\qedd$
\end{proof}

\section{Application of the main theorem}
This section discusses some applications of the main results in the previous section to other rigidity problems. Our main approach is to use the localization method called \emph{needle decomposition} introduced by Klartag \cite{Kl} on weighted manifolds and Ohta \cite{Oneedle} on non-reversible Finsler manifolds. The concept of needle decomposition is to reduce inequalities on a high-dimensional space to those on geodesics while preserving the weighted Ricci curvature bound. On weighted Riemannian manifolds, it was used as the primary approach on the rigidity proof of isoperimetric inequality (\cite{Ma2}) and logarithmic Sobolev inequality (\cite{OT}). In both works, the authors showed that the rigidity of these inequalities implies the rigidity of Poincar\'e inequality and hence implies a same splitting phenomenon in the rigidity of the sharp spectral gap. We would follow this observation to establish the Finsler version. 

Firstly,  we define transport rays associated with a $1$-Lipschitz function.

\begin{definition}[Transport rays]\label{df:T-ray}
Let $\varphi$ be a $1$-Lipschitz function on $M$.
We say that a geodesic $\eta:I\rightarrow M$ from a closed interval $I \subset M$ is a \emph{transport ray} associated with $\varphi$
if $\varphi(y)-\varphi(x) = d(x,y)$ holds for all $x,y \in \eta(I)$ and $\eta$ can not be extended to a longer geodesic satisfying this property.
\end{definition}

Now we recall the needle decomposition theorem on Finsler manifolds in \cite{Oneedle}. 

\begin{theorem}[Ohta]\label{th:Ndl}
Let $(M,F,\fm)$ be a forward and backward complete weighted Finsler manifold of dimension $n\geq 2$ satisfying $\Ric_N \ge K > 0$ for $N\in(-\infty,0]\cup[n,\infty]$ and $K\in\mathbb{R}$,
and take a function $f \in L^1(M)$ such that $\int_{M} f \,d\fm = 0$.
Then there exists a $1$-Lipschitz function $\varphi$ on $M$ (called the guiding function) and a decomposition $\{I_{\eta}\}_{\eta \in Q}$ of $M$ into transpot rays associated with $\varphi$,
a measure $\nu$ on the index set $Q$ and a family of probability measures $\{\fm_{\eta}\}_{\eta \in Q}$ on $M$
satisfying the following.
\begin{enumerate}[{\rm (i)}]
\item\label{Ndl0} For $\nu$-a.e. $\eta\in Q$. $\textup{supp}\mu_{\eta}\subset I_{\eta}$,
\item\label{Ndl1}
For any measurable function $\phi$ on $M$, we have 
\[\int_{M} \phi d\fm = \int_{Q} \bigg(\int_{I_{\eta}}\phi d\fm_{\eta} \bigg) d\nu(\eta).\]

\item\label{Ndl2}
If $I_{\eta}$ is not a singleton, then the weighted Ricci curvature of
$(I_{\eta},|\cdot|,\fm_{\eta})$ satisfies $\Ric_N \ge K$.

\item\label{Ndl3}
For $\nu$-almost every $\eta \in Q$, we have $\int_{I_{\eta}} f \,d\fm_{\eta} =0$.
\end{enumerate}
\end{theorem}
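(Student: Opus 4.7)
The plan is to follow the Klartag--Cavalletti--Mondino localization strategy adapted to the Finsler setting, using $L^1$-optimal transport as the driver. Starting from $f \in L^1(M)$ with $\int_M f \, d\fm = 0$, I would decompose $f = f^+ - f^-$ and consider the optimal transport problem between the measures $\mu^+ := f^+ \fm$ and $\mu^- := f^- \fm$ with respect to the (possibly asymmetric) Finsler distance $d$. The Kantorovich duality then produces a maximizer $\varphi$ that is $1$-Lipschitz with respect to $d$; this is the guiding function.

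Next, I would build the transport rays from $\varphi$. A point $x$ lies on a transport ray $I_\eta$ associated with $\varphi$ exactly when it sits on a maximal geodesic $\eta$ along which $\varphi$ grows at unit speed, i.e.\ $\varphi(\eta(t))-\varphi(\eta(s)) = t-s$ for $s \le t$. Using that $\varphi$ is $1$-Lipschitz and the Finsler first variation formula, one shows that $\nabla\varphi$ is well-defined $\fm$-a.e.\ on the transport set $\mathcal{T}$, with $F^*(d\varphi)=1$ a.e., and that the relation ``$x \sim y$ iff $x,y$ lie on a common transport ray'' gives a partition of $\mathcal{T}$ once one discards the branching set. The essentially non-branching property of Finsler geodesics, established in Ohta's needle paper via the strict convexity of $F^2$ and the exponential map, is what makes this partition well-behaved and guarantees that $M\setminus\mathcal{T}$ is $\fm$-negligible (any point outside $\mathcal{T}$ is a singleton ray and contributes trivially).

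The core step is the measure disintegration and the localization of curvature. I would parametrize the quotient space $Q$ of rays by a measurable section, apply the Rokhlin--Sudakov disintegration theorem to decompose $\fm\lfloor_{\mathcal{T}} = \int_Q \fm_\eta \, d\nu(\eta)$ with $\supp\fm_\eta \subset I_\eta$, and then prove that for $\nu$-a.e.\ $\eta$ the conditional density of $\fm_\eta$ with respect to arclength on $I_\eta$ satisfies the one-dimensional $\CD(K,N)$ inequality. This follows by testing the global $\CD(K,N)$ condition on $\fm$ against $W_2$-geodesics supported on pairs of transport rays and using a Fubini-type argument. Finally, the balance condition $\int_{I_\eta} f\,d\fm_\eta = 0$ is extracted from the optimality of the Kantorovich plan: since transport along $\eta$ goes from the ``positive'' end of the ray to the ``negative'' end, mass conservation on each ray forces the signed integral of $f$ to vanish.

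The main obstacle is undoubtedly the non-reversibility: asymmetric distance forces one to distinguish forward and backward ends of a ray, to work with the asymmetric Lipschitz condition $\varphi(y)-\varphi(x)\le d(x,y)$, and to verify that gradient flow lines of $\varphi$ match the transport rays as directed geodesics rather than unoriented curves. Establishing essential non-branching and measurability of the quotient map in this asymmetric setting is the step where Ohta's work contributes the genuinely new ingredients; in contrast, the curvature localization and the balance condition are conceptually parallel to the Riemannian case once the right framework is in place.
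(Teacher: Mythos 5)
This theorem is not proved in the paper at all: it is recalled verbatim as a known result of Ohta from \cite{Oneedle}, so there is no internal argument to measure your attempt against. What you wrote is, in outline, the strategy of the cited proof, which adapts Klartag's localization \cite{Kl} to non-reversible Finsler manifolds: solve the $L^1$ Monge--Kantorovich problem between $f^{+}\fm$ and $f^{-}\fm$ for the asymmetric cost $d$, take a Kantorovich potential as the $1$-Lipschitz guiding function $\varphi$, partition the transport set into maximal oriented transport rays, disintegrate $\fm$ along the rays, and verify the one-dimensional curvature bound and the balance condition $\int_{I_\eta} f\,d\fm_\eta=0$ ray by ray. So the skeleton is the right one, and you correctly identify non-reversibility (directed rays, the asymmetric Lipschitz condition, forward/backward completeness) as where the Finsler case needs genuinely new care.

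As a proof, however, it remains a plan, and two assertions need correction. First, $M\setminus\mathcal{T}$ is not $\fm$-negligible in general (take $f\equiv 0$ on an open set of positive measure, where no transport occurs); what is true, and what the decomposition actually uses, is that $f=0$ $\fm$-a.e.\ off the transport set, so those points enter as singleton needles on which (i)--(iv) hold trivially --- you half-say this, but the negligibility claim as stated is false. Second, the curvature localization in \cite{Oneedle} is not obtained by testing the global $\CD(K,N)$ condition against $W_2$-geodesics in the Cavalletti--Mondino style; Ohta follows Klartag's smooth route: almost-everywhere second-order regularity of $\varphi$ on the transport set and a Riccati/Jacobi-type comparison along each ray show that the conditional densities satisfy the one-dimensional $\Ric_N\ge K$ bound. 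A $W_2$-based argument could perhaps be made to work (since $\Ric_N\ge K$ is equivalent to $\CD(K,N)$ and smooth Finsler manifolds do not branch), but then the essentially-non-branching disintegration machinery would have to be supplied, which your sketch does not do. The genuinely hard steps --- measurability of the ray partition and of a section of $Q$, applicability of the disintegration theorem, regularity of $\varphi$ on the transport set, and deducing the mass balance from maximality of the potential --- are asserted rather than proved, so your proposal should be read as a correct road map to the external proof rather than a self-contained argument.
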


Now we use this theorem to connect the equality cases of logarithmic Sobolev and Bakry--Ledoux isoperimetric inequalities with the equality case of Poincar\'e inequality.

\begin{corollary}[Rigidity of logarithmic Sobolev inequality]\label{cor:log}
Let $ (M, F, \fm)$ be a complete, reversible, boundaryless weighted Finsler manifold of dimension $n\geq 2$ satisfying $\textup{Ric}_{\infty} \geq K > 0$, $\fm(M) = 1$. 
Assume that there is a locally Lipschitz, nonconstant nonnegative function $\rho$ with $\int_{M}\rho d\fm = 1$ such that
\[ \int_{M}\rho \log{\rho} d\fm = \frac{1}{2K} \int_{M}\frac{F^{2}(\nabla\rho)}{\rho}d\fm \]
and we assume furthermore the function $u:=\log{\rho}$ satisfying \eqref{eq:technical}.
Then we obtain the same diffeomorphic splitting of $(M,F,\fm)$ as in Theorem \ref{th:main}. If $(M, F, \fm)$ is a Berwald space, then we obtain the isometric splitting as in Theorem \ref{th:split}.
\end{corollary}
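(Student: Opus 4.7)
The plan is to reduce the equality in the logarithmic Sobolev inequality to equality in the Poincar\'e inequality \eqref{eq:Poin} for the centered function $u := \log\rho - \int_M \log\rho\, d\fm$, at which point Lemma \ref{lem:Poin} produces $\Delta u = -Ku$ weakly and Theorem \ref{th:main} yields the diffeomorphic splitting (with Theorem \ref{th:split} handling the Berwald case). I will carry this out via the needle decomposition of \cite{Oneedle}.

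First, I apply Theorem \ref{th:Ndl} to $f := \rho-1$, which has vanishing $\fm$-mean since $\int_M \rho\, d\fm = 1 = \fm(M)$. This yields a family $\{(I_\eta,\fm_\eta)\}_{\eta\in Q}$ of one-dimensional weighted intervals, each satisfying $\Ric_\infty \geq K$ and $\int_{I_\eta}\rho\, d\fm_\eta = 1$. Thus $\rho|_{I_\eta}$ is a probability density on each needle, and the one-dimensional logarithmic Sobolev inequality applies. Combining it with Fubini and the pointwise bound $|(\rho\circ\eta)'(t)| \leq F(\nabla\rho)(\eta(t))$, valid along every unit-speed geodesic because $\Lambda_F = 1$, yields
\[ \int_M \rho\log\rho\, d\fm \;\leq\; \frac{1}{2K}\int_Q \int_{I_\eta} \frac{|(\rho|_{I_\eta})'|^2}{\rho}\, d\fm_\eta\, d\nu \;\leq\; \frac{1}{2K}\int_M \frac{F^2(\nabla\rho)}{\rho}\, d\fm. \]
The hypothesised equality therefore forces (a) equality in the one-dimensional log-Sobolev inequality on $\nu$-a.e.\ needle and (b) $\nabla\rho$ tangent to the corresponding needle $\fm$-a.e.

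Invoking the classical one-dimensional rigidity of the log-Sobolev inequality under $\Ric_\infty \geq K > 0$ (which applies verbatim since each needle is a weighted one-dimensional Riemannian interval; cf.\ \cite{OT}), statement (a) implies that on every needle where $\rho$ is non-constant, $(I_\eta,\fm_\eta)$ is isometric to the Gaussian line $(\R,\gamma_K)$ and $u|_{I_\eta}(t) = a_\eta t + b_\eta$ is affine in the Gaussian parameter, with the one-dimensional Poincar\'e equality $\Var_{\fm_\eta}(u) = a_\eta^2/K$ holding automatically. Reintegrating via Fubini, using (b), and setting $b_\eta := \int_{I_\eta} u\, d\fm_\eta$, the variance decomposition gives
\[ \Var_\fm(u) \;=\; \int_Q \frac{a_\eta^2}{K}\, d\nu + \Var_\nu(b_\eta) \;\geq\; \frac{1}{K}\int_Q a_\eta^2\, d\nu \;=\; \frac{1}{K}\int_M F^2(\nabla u)\, d\fm, \]
and combined with Theorem \ref{th:Poin} this forces equality in \eqref{eq:Poin} for $u$ (incidentally yielding $b_\eta = 0$ for $\nu$-a.e.\ $\eta$).

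Lemma \ref{lem:Poin} now provides $\Delta u = -Ku$ in the weak sense. Since $\nabla u = \nabla\log\rho$ satisfies \eqref{eq:technical} by hypothesis, Theorem \ref{th:main} delivers the diffeomorphic splitting, and Theorem \ref{th:split} upgrades it to an isometric splitting in the Berwald case. The main obstacle I anticipate is the one-dimensional log-Sobolev rigidity invoked above: while essentially classical, it must pin down both the Gaussian structure of $\fm_\eta$ and the log-linear form of $\rho|_{I_\eta}$ on a general weighted interval rather than merely on the full Gaussian line, and should be cited or reproved with care. A secondary technical point is to verify that $u$ lies in $H^1_0(M)$ and that the integrability needed for Fubini holds; reversibility, $\fm(M)=1$, and the finiteness of the log-Sobolev right-hand side should suffice.
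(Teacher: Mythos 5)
Your proposal is correct and follows essentially the same route as the paper: needle decomposition applied to $f=\rho-1$, the one-dimensional logarithmic Sobolev inequality on each needle together with $F(\nabla^{\eta}\rho)\leq F(\nabla\rho)$, the equality case forcing needle-wise equality and then (via the one-dimensional analysis of \cite{OT}) needle-wise Poincar\'e equality, reintegration with Cauchy--Schwarz on the needle means to get equality in \eqref{eq:Poin} on $M$, and finally Lemma \ref{lem:Poin} plus Theorems \ref{th:main} and \ref{th:split}. Your only deviations are cosmetic: you center $u=\log\rho$ (harmless, since the variance and \eqref{eq:technical} are unaffected, and it is what Lemma \ref{lem:Poin} literally requires) and you spell out the Gaussian structure of the needles, which the paper leaves inside the citation of \cite{OT}.
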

\begin{proof}
We will show that the function $u:=\log{\rho}$ satisfying the equality case of Poincar\'e inequality \eqref{eq:Poin} and hence the spectral gap \eqref{eq:spec} by Lemma \ref{lem:Poin}.

Put $f:=\rho-1$ and notice that $f\in L^{1}(M)$ and $\int_{M}f d\fm = 0$. Let $\varphi, Q, \nu, \{(I_{\eta},\fm_{\eta})\}_{\eta \in Q}$ be the elements of the needle decomposition associated with $f$ as in Theorem \eqref{th:Ndl}. \eqref{Ndl3} in Theorem \ref{th:Ndl} implies $\int_{I_{\eta}}f d\fm_{\eta} = 0$ for almost every $\eta\in Q$, which yields $\int_{I_{\eta}} \rho d\fm_{\eta} = 1$. Therefore \eqref{Ndl2} in Theorem \eqref{th:Ndl} and the $1$-dimensional logarithmic Sobolev inequality yields
\begin{equation}\label{eq:log1}
\int_{I_{\eta}} \rho \log \rho d\fm_{\eta} \leq \frac{1}{2K} \int_{I_{\eta}} \frac{F^{2}(\nabla^{\eta}\rho)}{\rho}d\fm_{\eta},
\end{equation}
where $\nabla^{\eta}\rho$ denotes the slope of $\rho$ along $I_{\eta}$. We need the reversible condition here  since $F(-\dot{\eta}) \neq 1$ in the non-reversible case. As $F(\nabla^{\eta}\rho)\leq F(\nabla \rho)$, taking integration of the estimate above yields the logarithmic Sobolev inequality on $M$. Hence the equality in the condition implies the equality in \eqref{eq:log1} and $F(\nabla^{\eta}\rho) = F(\nabla \rho)$ for almost every $\eta\in Q$. The $1$-dimensional analysis in \cite{OT} yields the equality of Poincar\'e inequality for almost all $\eta\in Q$:
\[ \int_{I_{\eta}}u^2 d\fm_{\eta} - \Big( \int_{I_{\eta}}u d\fm_{\eta}\Big)^2 = \frac{1}{K} \int_{I_{\eta}} F^2(\nabla^{\eta} u) d\fm_{\eta}.\]
Note that $F(\nabla^{\eta}\rho) = F(\nabla \rho)$ for almost every $\eta\in Q$. By \eqref{Ndl1} in Theorem \eqref{th:Ndl} and the Cauchy–Schwarz inequality, we have
\begin{align*}
    \int_{M}u^2 d\fm - \Big( \int_{M}u d\fm\Big)^2 &= \int_{Q}\int_{I_{\eta}}u^2 d\fm_{\eta} d\nu(\eta) - \Big( \int_{Q}\int_{I_{\eta}}u d\fm_{\eta}d\nu(\eta)\Big)^2 \\
    &\geq \int_{Q}\int_{I_{\eta}}u^2 d\fm_{\eta} d\nu(\eta) -  \int_{Q}\Big(\int_{I_{\eta}}u d\fm_{\eta}\Big)^2 d\nu(\eta)\\
    &=  \frac{1}{K} \int_Q\int_{I_{\eta}} F^2(\nabla^{\eta} u) d\fm_{\eta}d\nu(\eta) \\
    &=  \frac{1}{K}\int_{M} F^2(\nabla u) d\fm.
\end{align*}
Hence, $u$ provides equality of the Poincar\'e inequality on $M$.
$\qedd$
\end{proof}

\begin{corollary}[Rigidity of Bakry--Ledoux isoperimetric inequality]
Let $ (M, F, \fm)$ be a complete, reversible, boundaryless weighted Finsler manifold of dimension $n \geq 2$ satisfying $\textup{Ric}_{\infty} > K > 0$, $\fm(M) = 1$. Assume that $\cI_{(M,F,\fm)}(\theta) = \cI_{K,\infty,\infty}(\theta)$ for some $\theta \in (0,1)$ and the guiding function $\varphi$ of the needle decomposition associated with the function $f:=1_{A}-\theta$ satisfies \eqref{eq:technical}, where $A$ is an isoperimetric minimizer.
Then we obtain the same diffeomorphic splitting of $(M,F,\fm)$ as in Theorem \ref{th:main}. If $(M, F, \fm)$ is a Berwald space, then we obtain the isometric splitting as in Theorem \ref{th:split}.
\end{corollary}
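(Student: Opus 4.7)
The plan is to parallel the proof of Corollary~\ref{cor:log}: use needle decomposition to reduce to the one-dimensional Gaussian setting on each transport ray, then show that the guiding function $\varphi$ itself realizes the equality case of the Poincar\'e inequality on $M$, so that Lemma~\ref{lem:Poin} and Theorem~\ref{th:main} supply the desired splitting.

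Concretely, I would apply Theorem~\ref{th:Ndl} to $f := 1_A - \theta$ to obtain a guiding function $\varphi$, a decomposition into transport rays $\{I_\eta\}_{\eta\in Q}$, a quotient measure $\nu$, and conditional probabilities $\fm_\eta$ satisfying $\Ric_\infty \geq K$ along each nontrivial needle and $\fm_\eta(A\cap I_\eta)=\theta$. Applying the one-dimensional Bakry--Ledoux isoperimetric inequality fiberwise, and using reversibility to integrate Minkowski contents over the quotient,
\[
\cI_{K,\infty,\infty}(\theta)=\fm^{+}(A)\geq \int_Q \fm_\eta^{+}(A\cap I_\eta)\,d\nu(\eta)\geq \cI_{K,\infty,\infty}(\theta),
\]
forces equality in every fiber. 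The one-dimensional rigidity of \cite{Ma2} then identifies each $(I_\eta,|\cdot|,\fm_\eta)$ with the Gaussian line $(\R,\gamma_K)$, so that $\varphi|_{I_\eta}$ equals the canonical coordinate up to an additive shift $c_\eta$.

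The next step is a variance calculation. The Gaussian structure on each needle gives $\int_{I_\eta}\varphi\,d\fm_\eta=c_\eta$ and $\int_{I_\eta}\varphi^{2}\,d\fm_\eta=1/K+c_\eta^{2}$, so the decomposition formula of Theorem~\ref{th:Ndl} combined with Cauchy--Schwarz yields $\Var_\fm(\varphi)\geq 1/K$. Since $\varphi$ is $1$-Lipschitz and $F$ is reversible, $F(\nabla\varphi)\leq 1$ almost everywhere, and Theorem~\ref{th:Poin} applied to $\varphi$ squeezes
\[
1\leq K\Var_\fm(\varphi)\leq \int_M F^{2}(\nabla\varphi)\,d\fm\leq 1,
\]
forcing equality throughout. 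Setting $u := \varphi - \int_M \varphi\,d\fm$, Lemma~\ref{lem:Poin} produces $\Delta u = -Ku$ weakly, and the hypothesis \eqref{eq:technical} on $\varphi$ transfers to $u$ since $\nabla u = \nabla\varphi$, so Theorem~\ref{th:main} delivers the diffeomorphic splitting and Theorem~\ref{th:split} the isometric one in the Berwald case.

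The main obstacle is the fiberwise integration of Minkowski contents in step one: on reversible Riemannian manifolds, $\fm^{+}(A)\geq \int_Q \fm_\eta^{+}(A\cap I_\eta)\,d\nu(\eta)$ is a standard disintegration fact, but on Finsler manifolds the forward neighborhoods distort asymmetrically --- this is precisely why the reversibility assumption is imposed. A secondary technical point is to verify that the one-dimensional rigidity produces the entire Gaussian line rather than a proper Gaussian needle, since the variance computation in step two relies on the full moments of $\gamma_K$ on each fiber.
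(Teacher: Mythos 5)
Your proposal is correct and follows essentially the same route as the paper: needle decomposition for $f=1_A-\theta$, fiberwise Bakry--Ledoux equality, the one-dimensional rigidity of \cite{Ma2} to get Poincar\'e equality on each needle, then Cauchy--Schwarz in the quotient plus the $1$-Lipschitz bound $F(\nabla\varphi)\leq 1$ to conclude global Poincar\'e equality, feeding into Lemma~\ref{lem:Poin} and Theorems~\ref{th:main}, \ref{th:split}. The only stylistic difference is that you make the Gaussian identification of each needle explicit and squeeze via $1\leq K\Var_{\fm}(\varphi)\leq \int_M F^2(\nabla\varphi)\,d\fm\leq 1$, whereas the paper states the fiberwise Poincar\'e equality abstractly and reuses the Cauchy--Schwarz computation from Corollary~\ref{cor:log}; these are the same argument. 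The caveat you raise — that the one-dimensional rigidity must produce the full Gaussian line rather than a proper sub-needle, so that the moment computation on each fiber is valid — is indeed the content of the cited analysis in \cite{Ma2} and is correctly flagged.
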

\begin{proof}
Let $A$ be the isoperimetric minimizer such that the guiding function $\varphi$ of the needle decomposition associated with the function $f:=1_{A}-\theta$ satisfies \eqref{eq:technical} and $Q, \nu, \{(I_{\eta},\fm_{\eta})\}_{\eta \in Q}$ be other elements of the needle decomposition associated with $f$. We will show that $\varphi$ satisfies the equality case of Poincar\'e inequality. \eqref{Ndl3} in Theorem \ref{th:Ndl} implies $\int_{I_{\eta}}f d\fm_{\eta} = 0$ for almost every $\eta\in Q$ and hence $\fm_{\eta}(A_{\eta}) = \theta$, where we put $A_{\eta}:=A\cap I_{\eta}$. Therefore, \eqref{Ndl2} in Theorem \ref{th:Ndl} and the $1$-dimensional isoperimetric inequality yields $\fm_{\eta}^{+}(A_{\eta}) \geq \cI_{K,\infty,\infty}(\theta)$. We also need the reversible condition here as $F(-\dot{\eta}) \neq 1$ in the non-reversible case. Taking integration over the index set $Q$, we obtain the isoperimetric inequality in $M$:
\[ \fm^{+}(A) \geq \int_{Q} \fm_{\eta}^{+}(A_{\eta})d\nu(\eta) \geq \cI_{K,\infty,\infty}(\theta).\]
Hence, we have $\fm_{\eta}^{+}(A_{\eta}) = \cI_{K,\infty,\infty}(\theta)$ for almost every $\eta\in Q$. $1$-dimensional analysis in \cite{Ma2} implies that $\varphi$ satisfies the equality of Poincar\'e inequality:
\[ \int_{I_{\eta}}\varphi^2 d\fm_{\eta} - \Big( \int_{I_{\eta}}\varphi d\fm_{\eta}\Big)^2 = \frac{1}{K} \int_{I_{\eta}} F^2(\nabla^{\eta} \varphi) d\fm_{\eta},\]
where $\nabla^{\eta}\varphi$ denotes the slope of $\varphi$ along $I_{\eta}$. We note that $1=F(\nabla^{\eta} \varphi)$ since $\eta$ is a transport ray associated with $\varphi$ and $F(\nabla \varphi)\leq 1$ by the Lipschitz condition of $\varphi$ and the reversibility of $(M,F,\fm)$. So we have $1=F(\nabla^{\eta} \varphi)\leq F(\nabla \varphi)\leq 1$ and hence all the inequalities become equality. By an analogous calculation as in Corollary \ref{cor:log}, we obtain that $\varphi$ provides equality of the Poincar\'e inequality on $M$.
$\qedd$
\end{proof}
\begin{remark}
We add the reversible condition of $(M,F,\fm)$ on these corollaries to use symmetric analysis on $1$-dimensional needles. For non-reversible Finsler manifolds, we might need another technique rather than needle decomposition to study these rigidity problems.
\end{remark}

\begin{remark}
One might use the needle decomposition to work with quantitative problems (the stability of a geometric inequality when equality nearly holds). On weighted Riemannian manifolds, the quantitative Bakry--Ledoux isoperimetric inequality was studied on \cite{MO1,MO2} and the proof covered the case of reversible Finsler manifolds.
\end{remark}

{\small

}

\end{document}